\newtheorem{theorem}{Theorem}[section]
\newtheorem{corollary}[theorem]{Corollary}
\newtheorem{question}[theorem]{Question}
\newtheorem{conjecture}[theorem]{Conjecture}
\title{A Covering Pursuit Game}
\author{Benjamin Gillott}
\begin{document}
\maketitle
\setcounter{section}{-1}

\begin{abstract}
In the `Covering' pursuit game on a graph, a robber and a set of cops play alternately, with the cops each moving to an adjacent vertex (or not moving) and the robber moving to a vertex at distance at most 2 from his current vertex. The aim of the cops is to ensure that, after every one of their turns, there is a cop at the same vertex as the robber. How few cops are needed? 
\\
\\
Our main aim in this paper is to consider this problem for the two-dimensional grid $[n]^2$. Bollob\'{a}s and Leader asked if the number of cops needed is $o(n^2)$. We answer this question by showing that $n^{1.999}$ cops suffice. We also consider some applications. In particular we study the game `Catching a Fast Robber', concerning the number of cops needed to catch a fast robber of speed $s$ on the two-dimensional grid $[n]^2$. We improve the bounds proved by Balister, Bollob\'{a}s, Narayanan and Shaw for this game.
\end{abstract}

\section{Introduction}
The game of Cops and Robbers, introduced over forty years ago independently by Nowakowski and Winkler \cite{og1} and Quilliot \cite{og2}, is a perfect information pursuit-evasion game played on an undirected graph $G$. The cops initially position themselves on the vertices of the graph as they wish, the robber then choses a vertex to start on. They then take turns moving; the cops going first. On the cops' turn each cop may move to an adjacent vertex or stay put, and similarly for the robber. The cops win if at some point they are able to occupy the same vertex as the robber. The robber wins if he is able to avoid the cops indefinitely. The cop number of a graph, $c(G)$, is defined to be the minimum number of cops needed to win.
\\
\\
In this paper we study the related game of Covering. Covering is played on a graph $G$,  the robber can move at a faster speed than the cops, $2$ unless otherwise stated. In other words, the robber may move to a vertex at most distance $2$ away on each turn of his, whilst each cop can only move to an adjacent vertex or stay put. The cops choose the starting configuration. The team of cops win if they are able to ensure that after each turn of theirs at least one cop occupies the same vertex as the robber. For a given graph $G$, we ask how many cops are needed to win Covering on $G$.
\\
\\
To get a feel for this game, let us first consider Covering played on the path, $[n] = \{1,2,3,..,n\}$. Here the exact answer is $\lceil\frac{n}{2}\rceil$. To see that this many cops suffices, the $i$th cop can guard $\{2i-1,2i\}$ (or just $\{2i-1\}$ if $2i > n$) by starting at $2i-1$ and each turn staying put if the robber is not on $\{2i-1,2i\}$ or jumping to the robber if he is. The robber can beat $\lceil\frac{n}{2}\rceil - 1$ cops by initially moving to $1$, and then `sprinting' to the right moving $2$ to the right each turn. We see that no cop could be on top of the robber at two different times whilst he is `sprinting', therefore there must have been an instant where no cop was on the robber.
\\
\\
What happens in two dimensions? Thus our graph is $[n]^2$ with distance given by the $l_\infty$ metric, i.e. two distinct vertices are adjacent if each co-ordinate differs by at most 1. The two-dimensional analogues of these strategies give linear lower bounds and quadratic upper bounds. Indeed, each cop is able to guard a $2$ by $2$ sub-square so $\lceil\frac{n}{2}\rceil^2$ cops suffice. The robber can sprint along the main diagonal beating $\lceil\frac{n}{2}\rceil - 1$ cops. 
\\
\\
This motivated Bollob\'{a}s and Leader \cite{communication} to ask if $o(n^2)$ cops could win Covering played on $[n]^2$. Section $1$ is primarily concerned with this question. Our main result of the paper shows that this does hold, in particular in Theorem 1.2 we show that $n^{1.999}$ cops can win. We also give a lower bound of $n^{1.357}$ in Theorem 1.1. Both of these results are for sufficiently large $n$.
\\
\\
In Section $2$ we consider the natural generalisation to Covering on $[n]^d$. It turns out that a related notion of key importance is exact capture time (meaning that the cops do not have to cover the robber at all times, but only at a fixed pre specified time $T$). We prove in Theorem 2.1 that there are constants $c_1$ and $c_2$ such that it takes between $c_1T^{d/2}$ and $c_2T^{d/2}$ cops to ensure that a robber is caught at time exactly $T$  in the $d$ dimensional grid. We use this to show, in Corollary 2.2, that the number of cops needed to win Covering on $[n]^d$ is between $n^{\frac{d}{2}}$ and $n^{\frac{d}{2}+1}$. 
\\
\\
We briefly touch on Covering on the torus at the end of Section 1; the torus being the graph on $[n]^2$, with an edge between two distinct vertices if each coordinate differs by at most 1 modulo $n$. Perhaps surprisingly, in Theorem 1.3 we show that the answer on the torus differs from the answer on the grid by at most a factor of $4$.
\\
\\
Lastly, Sections $3$ and $4$ are concerned with applications of Covering, namely to the games of  Catching a Fast Robber and Rugby. Catching a Fast Robber was introduced by Fomin, Golovach, Kratochvıl, Nisse, and Suchan in \cite{ogfastrobber}. The game is concerned with how many cops are needed to catch a fast robber on the grid $[n]^2$. The robber moves with a speed $s >1$, the cops choose their starting position and then the robber chooses where to start. The robber loses if at some point a cop is on the same vertex as the robber. In Corollary 4.1 we show an upper bound of $n^{0.999}$ for a robber with speed 2. This is the first $o(n)$ upper bound. In Corollary 4.2 we prove a lower bound of $n^{0.03}$ for a robber with speed 2. This improves on the lower bound proved in \cite{BALISTER2017341} by Balister, Bollob\'{a}s, Narayanan and Shaw of $e^{c\log(n)/\log(\log(n))}$ for a robber with speed $10^{25}$.
\\
\\
Rugby is played on $\mathbb{Z} \times [n]^2$ - distance being given by $l_\infty$ again - and the robber being allowed to move at most a distance $2$ each turn. The cops choose their initial position, then the robber starts at $(-C, n, n)$, where $C$ is very large so that the robber's first coordinate is at least $2n$ less than any cops' first coordinate. The robber wins if he is able to pass the plane $\{k\} \times [n]^2$ for all positive $k$, without any cop ever being on top of him. We ask how many cops are needed to prevent the robber from winning. It is worth noting that if all cops stay in the same plane in Rugby, the question reduces to Covering on $[n]^2$. We primarily look at the $1$-dimensional game of Rugby, played on $\mathbb{Z}\times [n]$, as this still captures the difficulty of the problem. We find that the $1$-dimensional game of Rugby is remarkably similar to Catching a Fast Robber. In Theorem 3.2 we prove an upper bound of $n^{0.999}$ and in Theorem 3.3 we prove a lower bound of $n^{0.03}$ for sufficiently large $n$. Adapting these, rather straightforwardly, to Catching a Fast Robber is how we prove Corollaries 4.1 and 4.2.
\\
\\
In all sections, results proved for a robber with speed 2 adapt to a robber with speed $s$, $s>1$, giving different bounds. For background on pursuit evasion games see William Baird and Anthony Bonato \cite{background}.
\section{Covering on $[n]^2$}
We start this section by giving a lower bound on the number of cops needed. We define the `density' of a $k$ by $k$ square to be the number of cops in it divided by $k^{1.357}$. The strategy is to seek out smaller sub-squares with no larger density and move into them, and then seek a smaller square again. This way, at some point we enter a sub-square small enough such that there are no cops in it (so long as the initial density was small enough).
\\
\\
What we need to ensure is that the cops outside the sub-square when we entered it cannot reach us - we do this by restricting where we look for the sub-squares. We also need to ensure that the squares we look at moving into are all `disjoint', in the sense that a cop could not have been in multiple of them. One can see that the squares in Figure 1 have these properties, when the robber sprints (i.e. runs at full speed) up the diagonal. In the theorem below `we' will always mean the robber.

\begin{theorem}
We need $\Omega(n^{1.357})$ cops to win Covering on the Grid, $[n]^2$.
\end{theorem}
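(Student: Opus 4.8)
The plan is to exhibit an adaptive escape strategy for the robber and show that it beats any team of at most $\varepsilon n^{1.357}$ cops, for a small absolute constant $\varepsilon>0$; throughout, ``we'' denotes the robber, who never stops sprinting (each turn he moves $\ell_\infty$-distance exactly $2$ along some diagonal). First I would isolate the speed-gap fact that everything rests on: if the robber sprints along one fixed diagonal from time $t_1$ to time $t_2$, his two positions are at $\ell_\infty$-distance $2(t_2-t_1)$, whereas a single cop moves only distance $t_2-t_1$ in that time; hence a cop on the robber's vertex at time $t_1$ cannot be on it at time $t_2$. In the contrapositive form used repeatedly: a cop at distance $d$ from a set $R$ at time $t$ cannot reach the sprinting robber while he is inside $R$ during $[t,t+d]$. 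This is what lets us ``leave cops behind.''

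Next I would set up the density bookkeeping. For a $k\times k$ axis-aligned square $S$ and a time $t$, define its \emph{density} to be the number of cops within $\ell_\infty$-distance $\beta k$ of $S$ at time $t$, divided by $k^{1.357}$, where $\beta$ is a constant pinned down in the optimisation. The crux is a geometric lemma: inside any $k\times k$ square $S$ at whose corner the sprinting robber has just arrived (say at time $t$), there is a family $S_1,\dots,S_m$ of pairwise disjoint sub-squares, all of side $k'<k$, enjoying three properties, and Figure 1 is arranged precisely so that one can read them off. \textbf{(i)} Continuing to sprint, the robber can veer off the diagonal and reach the corner of whichever single $S_j$ he chooses. \textbf{(ii)} All these would-be arrivals happen early enough that any cop lying outside $S_j$ at time $t$ cannot catch the robber once he is inside $S_j$; more strongly, every cop outside $S$ at time $t$ stays harmless throughout the whole next level of the recursion. \textbf{(iii)} No single cop can lie within distance $\beta k'$ of two different $S_j$ at the respective times the robber would arrive at them, so the cop-counts attached to the $S_j$ sum to at most the cop-count attached to $S$ (up to a constant from the padding). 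Given such a family, a one-line averaging argument yields an index $j^\ast$ with cop-count at most the cop-count of $S$ divided by $m$ (times that constant); as soon as $m\ge C_\beta (k/k')^{1.357}$ this gives $\mathrm{density}(S_{j^\ast})\le \mathrm{density}(S)$.

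Then I would run the recursion: start with $S=[n]^2$ (the robber sprinting up its main diagonal), at each stage pass to the low-density sub-square $S_{j^\ast}$ furnished by the lemma, sprint up its diagonal, and repeat. Property (ii), applied inductively, guarantees that cops shed at earlier levels never catch up, so the only cops that can ever threaten the robber are those counted in the current density; and the lemma is arranged so that this density does not increase down the recursion (the cop-counts add with at worst a loss that is negligible over all the levels). After enough steps the square has side $O(1)$, and if the starting density was below a suitable absolute constant, this last square has no cop within distance $\beta\cdot O(1)\ge 1$ of it; so when the robber steps in, no cop is adjacent, and the cops fail to cover him on their next turn. Since the starting density equals $N/n^{1.357}$, taking $\varepsilon$ below that constant shows that $N\le\varepsilon n^{1.357}$ cops lose, i.e.\ $\Omega(n^{1.357})$ cops are needed.

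The hard part is the geometric lemma: one must produce a single family of sub-squares that is at once abundant --- at least $C_\beta(k/k')^{1.357}$ of them --- and sufficiently spread out in space and in time to make (i)--(iii) hold simultaneously. The exponent $1.357$ is not incidental: it is (essentially) the largest value for which the packing pictured in Figure 1 --- side-$k'$ squares reachable by the diagonal sprint, with pairwise disjoint $\beta k'$-neighbourhoods, temporally separated so that no cop visits two of them in time --- still supplies that many candidates at the chosen shrink ratio. Balancing $\beta$, the shrink ratio $k/k'$, and the candidate count so that the exponent lands on $1.357$ is where the real calculation sits; the averaging and the surrounding induction are routine once the lemma is in hand.
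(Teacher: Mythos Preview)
Your framework is the right one---define a density, sprint along diagonals, locate a low-density sub-square, recurse, and use the speed gap to shed external cops---and this matches the paper exactly. The gap is in your geometric lemma. You ask for a family $S_1,\dots,S_m$ of sub-squares \emph{all of the same side $k'$} with $m\ge C(k/k')^{1.357}$; but no such single-scale family exists. With the constraints you list, a sub-square checked at time $t_j$ must sit at $\ell_\infty$-distance more than $t_j$ from $\partial S$ (to exclude external cops), while its entry corner lies at distance $2t_j$ from the start corner (the robber sprints at speed $2$). Together with the temporal disjointness condition $d(S_j,S_{j'})>|t_j-t_{j'}|$, this forces the checkpoints to be spaced at least $\Theta(k')$ apart along the diagonal, and there is only room for $O(k/k')$ of them. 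So a single scale yields at best $m=O(k/k')$, i.e.\ an exponent $\le 1$, not $1.357$.

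What the paper actually does---and what Figure~1 depicts---is a \emph{multi-scale} family: as the robber sprints up the diagonal he checks, in order, pairs of sub-squares of side roughly $k/32,\,k/16,\,k/8$ and finally three sub-squares of side roughly $k/4$ at the centre (and in the refined version, yet smaller squares interleaved between them). The robber enters the first one whose density is no larger than that of $S$; if none qualifies, then summing the cop-counts over all scales forces
\[
\alpha\Bigl(3(\tfrac14)^{f}+2(\tfrac18)^{f}+4(\tfrac1{16})^{f}+8(\tfrac1{32})^{f}+\cdots\Bigr)k^{f}
\]
to exceed $\alpha k^{f}$, a contradiction precisely when the bracketed series exceeds $1$, i.e.\ when $f<\log_2\frac{1+\sqrt{17}}{2}\approx 1.3576$. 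The exponent $1.357$ comes from summing over \emph{all} scales simultaneously; no single ``shrink ratio $k/k'$'' produces it. Your averaging step and your treatment of properties (i)--(iii) are otherwise sound, but you need to replace the uniform-$k'$ lemma by this multi-scale sequential check to reach the claimed exponent.
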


\begin{proof}
In the proof we will make it easier for the robber by changing the rules slightly so that the cops need to be on them after the robber's moves as well - this only changes the answer by a factor of at most $25$, since if one has a cop strategy for the original game then by adding $24$ cops around each cop on the centred 5 by 5, this wins the modified game as well.
\\
\\
We prove the analogous result for $n^{1.2 -\epsilon}$, any $\epsilon > 0$, initially for clarity. What we show is that for a robber starting in the corner of a  square of side length $N = 2^{n} - 33$, with cop density (defined with $1.2$ instead of $1.357$) at most $2^{-22}$, it is possible for the robber to reach an empty square, not reachable by any cops initially outside the $N$ by $N$ square. The base cases, $n \leq 18$, are immediate as there are no cops in the $N$ by $N$ square. Once we have shown this we have the result for any size square as the robber can play on the largest $2^{n} -33$ sided square contained in his own.
\\
\\
We note that the choice of $33$ is simply so we never have to worry about whether `the cops are too close for our argument to work'.
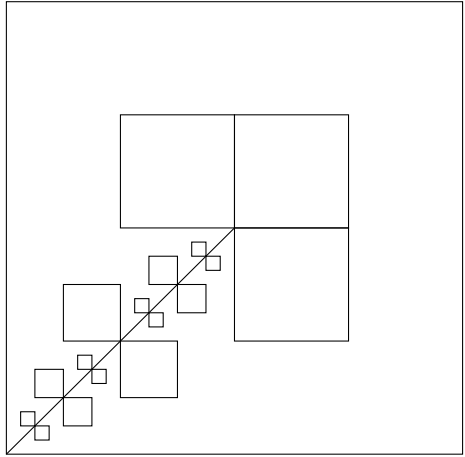
\begin{figure}[H]
\centering
\begin{tikzpicture}
\draw (0,0) -- (3,3);
\draw (0,0) -- (0,6) -- (6,6) -- (6,0) -- cycle;
\draw (3,3) -- (3,4.5) -- (4.5,4.5) -- (4.5,3) -- cycle;
\draw (3,3) -- (3,4.5) -- (1.5,4.5) -- (1.5,3) -- cycle;
\draw (3,3) -- (4.5,3) -- (4.5,1.5) -- (3,1.5) -- cycle;
\draw (1.5,1.5) -- (1.5,2.25) -- (0.75,2.25) -- (0.75,1.5) -- cycle;
\draw (1.5,1.5) -- (2.25,1.5) -- (2.25,0.75) -- (1.5,0.75) -- cycle;
\draw (3/4,3/4) -- (3/4,9/8) -- (3/8,9/8) -- (3/8,3/4) -- cycle;
\draw (3/4,3/4) -- (9/8,3/4) -- (9/8,3/8) -- (3/4,3/8) -- cycle;
\draw (1.5+3/4,1.5+3/4) -- (1.5+3/4,1.5+9/8) -- (1.5+3/8,1.5+9/8) -- (1.5+3/8,1.5+3/4) -- cycle;
\draw (1.5+3/4,1.5+3/4) -- (1.5+9/8,1.5+3/4) -- (1.5+9/8,1.5+3/8) -- (1.5+3/4,1.5+3/8) -- cycle;
\draw (3/8,3/8) -- (3/8,9/16) -- (3/16,9/16) -- (3/16,3/8) -- cycle;
\draw (3/8,3/8) -- (9/16,3/8) -- (9/16,3/16) -- (3/8,3/16) -- cycle;
\draw (3/8+3/4,3/8+3/4) -- (3/8+3/4,9/16+3/4) -- (3/16+3/4,9/16+3/4) -- (3/16+3/4,3/8+3/4) -- cycle;
\draw (3/8+3/4,3/8+3/4) -- (9/16+3/4,3/8+3/4) -- (9/16+3/4,3/16+3/4) -- (3/8+3/4,3/16+3/4) -- cycle;
\draw (3/8+1.5,3/8+1.5) -- (3/8+1.5,9/16+1.5) -- (3/16+1.5,9/16+1.5) -- (3/16+1.5,3/8+1.5) -- cycle;
\draw (3/8+1.5,3/8+1.5) -- (9/16+1.5,3/8+1.5) -- (9/16+1.5,3/16+1.5) -- (3/8+1.5,3/16+1.5) -- cycle;
\draw (3/8+9/4,3/8+9/4) -- (3/8+9/4,9/16+9/4) -- (3/16+9/4,9/16+9/4) -- (3/16+9/4,3/8+9/4) -- cycle;
\draw (3/8+9/4,3/8+9/4) -- (9/16+9/4,3/8+9/4) -- (9/16+9/4,3/16+9/4) -- (3/8+9/4,3/16+9/4) -- cycle;
\end{tikzpicture}
\caption{The possible sub-squares to enter}
\end{figure}
Let $\alpha$ be the cop density of the $N$ by $N$ square. Suppose it is true for all smaller $n$. We start in co-ordinate $(1,1)$ and walk up the main diagonal to $(2^{n-4}-3,2^{n-4}-3)$ in our $N$ by $N$ square, always taking steps of length 2. We then enter into the sub-square with side length $2^{n-5}-33$ with bottom right co-ordinate $(2^{n-4}-3, 2^{n-4}-1)$ if the cop density of that sub-square is smaller than the original square's cop density. Similarly, we do the same for the sub-square with top left co-ordinate $(2^{n-4}-1,2^{n-4}-3)$ and side length $2^{n-5}-33$. We can then induct: since no cops from outside the $N$ by $N$ square could have reached either smaller square in this time, we know that if we reach an empty square in either sub-square that is not reachable from outside, then we are done.
\\
\\
If we entered neither of these sub-squares, we go up to co-ordinate $(2^{n-3}-5, 2^{n-3}-5)$ and perform the same strategy with the two sub-squares off the main diagonal as in Figure 1. We continue this until we enter a smaller sub-square and can induct. If this never happens we reach $(2^{n-1}-17,2^{n-1}-17)$ - one square off the centre of the $N$ by $N$ square. 
\\
\\
At this point, if any of the three sub-squares based at the centre, shown in Figure $1$, have density at most $\alpha$ then we can enter one of them and induct. If the three sub-squares are all of higher density than the original square, then we have a contradiction as follows: one can see that if a cop was in one of the sub-squares when we checked the density of it, then it is impossible for this cop to have been in any of the other sub-squares when we checked their density. So we must have at least this many cops:
$$
\alpha(3(2^{n-2}-33)^{1.2} + 2(2^{n-3}-33)^{1.2} + 4(2^{n-4}-33)^{1.2} + 8(2^{n-5}-33)^{1.2})
$$
For $n \geq 19$, this is more than $\alpha N^{1.2}$. This gives us a contradiction.
\\
\\
Now, if we consider repeating the same argument with a finer configuration of sub-squares that we look to enter. We add in $16$ squares of side length $2^{n-6}-33$ between each pair of squares in Figure $1$, then $32$ squares of side length $2^{n-7}-33$, and so on. Note we will need to increase the value of $33$ as well, in order to have that the squares we wish to enter have integer coordinates.
\\
\\
For us to be able to prove a lower bound of $n^f$ for sufficiently large $n$ in this manner, we need that $3(1/4)^f + 2(1/8)^f + 4(1/16)^f + ... > 1$. This is equivalent to $f < \log_2 \frac{1+\sqrt{17}}{2}$, one can check that $1.357$ satisfies this.
\end{proof}

We do not see a way to improve this strategy. What we have been able to show is that this is the furthest one can take it by looking at moving into sub-squares and running up the main diagonal. But it is still plausible that one could do better by running into different sub-shapes or along a different path.
\\
\\
We now move on to showing an upper bound to the number of cops needed. There are two ideas that go into this. Firstly, we observe that we can build a strategy for a bigger square by using strategies for smaller squares. If we have a strategy to catch a robber on an $n$ by $n$ square, then consider a $15n$ by $15n$ square tiled with $225$ $n$ by $n$ sub-squares. On each sub-square we have a team of cops playing our strategy to catch a phantom robber defined to be on the vertex in their sub-square closest to the real robber. This is possible as we observe that this phantom robber never moves a distance more than $2$. Then, when the robber really enters any sub-square, they occupy the position of the phantom robber and therefore a cop is on top of them.

\begin{figure}[H]
\centering
\begin{tikzpicture}
\draw[step=1cm,gray,very thin] (0,0) grid (10,1);
\fill[black] (0,0) rectangle (1,1);
\foreach \x in {1,2,...,9} {
        \draw[->,bend left] (\x+0.4,-0.1) to (\x-0.4,-0.1);
    }
\draw[step=1cm,gray,very thin] (0,-2) grid (10,-1);
\fill[black] (9,-2) rectangle (10,-1);
\end{tikzpicture}
\caption{A fast hole}
\end{figure}
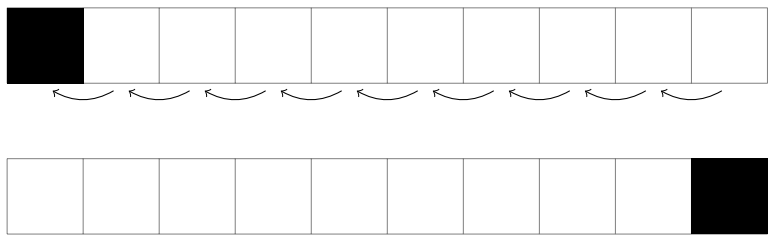

Secondly, we consider trying to do the strategy above but missing one sub-square, we move the teams of cops around to shift the missing sub-square such that the robber can never reach the missing square (or any team of cops thats moving). To achieve this we use that a `hole' moves faster than a cop. If along a path of cops, with a hole at one end, all cops move on step along the path towards the hole, in effect no cop has moved and the hole has shifted to the other end of the path. See Figure 2 for a pictorial representation of this. We move the missing sub-square around in this manner.

\begin{theorem}
$O(n^{1.999})$ cops can win Covering on the Grid, $[n]^2$.
\end{theorem}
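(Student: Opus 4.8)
The plan is to turn the two ideas sketched above — building a strategy on a large square out of strategies on small squares, and the ``fast hole'' of Figure 2 — into a self-improving recursion. Write $C(n)$ for the number of cops needed to win Covering on $[n]^2$. The target is the inequality
\[ C(15n) \le 224\, C(n). \]
Once this is established, iterating it from the trivial seed $C(n) \le \lceil n/2\rceil^2 \le n^2$ gives $C(n) = O\!\left(n^{\log_{15} 224}\right)$, and since $15^{1.999} = 225\cdot 15^{-0.001} > 225/1.003 > 224$ we have $\log_{15} 224 < 1.999$, so $C(n) = O(n^{1.999})$. (For $n$ not of the form $15^j$ times a bounded seed value, embed $[n]^2$ in the next such square, losing only a constant factor.)

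To get the inequality, tile $[15n]^2$ by the $225$ axis-aligned copies of $[n]^2$ — the \emph{slots} — forming a $15\times 15$ coarse board. At every moment exactly one slot is the \emph{hole} and each of the other $224$ carries a team of $C(n)$ cops. A team whose slot is stationary plays its $[n]^2$-strategy against its phantom robber, the point of its slot nearest the real robber; since the nearest-point map onto a box is $1$-Lipschitz in $\ell_\infty$, that phantom moves at most $2$ each turn, so the strategy is legal, and the instant the real robber enters a stationary covered slot he coincides with its phantom and hence with a cop. Thus the robber can escape coverage only by occupying the hole or a slot whose team is mid-relocation, and the whole problem is to run the teams so that he reaches neither.

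The ``fast hole'' does exactly this on the coarse board. There the real robber is very slow: to move from a slot to one two coarse-steps away he must physically cross a full slot, i.e.\ move $\ell_\infty$-distance more than $n$, costing more than $n/2$ of his turns, so over any $n$-turn window his coarse position changes by at most $2$. The hole is fast: given a path (a ``rope'') of slots with the hole at one end, if all the teams on the rope slide one slot towards the hole simultaneously — a physical move of length $n$, hence $n$ turns — then afterwards the hole sits at the far end of the rope while no cop has moved more than distance $n$. So in one $n$-turn \emph{burst} the hole can leap many coarse steps (up to the diameter of the board) while the robber advances at most $2$. Since during a burst only the rope's slots fail to be covered, it suffices to choose each burst's rope to stay coarse-distance at least, say, $5$ from the robber throughout the burst; then the robber provably cannot touch a moving team or the hole. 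Because of the huge speed gap, such ropes always exist: we are playing a pursuit on a $15\times 15$ board in which the evader (the hole) is vastly faster than the pursuer (the robber) and need only keep coarse-distance $\ge 3$, reacting with a fresh burst whenever the robber closes in, and with a ``null burst'' (do nothing for $n$ turns) always available when he is not threatening. One also checks the routine point that a team which has just slid into a new slot can reach a legal starting configuration for that slot before the robber, who is many slots away, could enter it.

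The heart of the argument, and the main thing to get right, is this coordination on the coarse board: maintaining turn by turn that the robber is coarse-distance $\ge 3$ from the hole and from every relocating team, subject to the constraint that the hole moves only in bursts of $\Theta(n)$ turns while the robber reacts every single turn — so each burst must be planned with enough margin for the robber's motion during it. The speed disparity makes this intuitively clear, but it must be set up carefully, and, crucially, with \emph{no} constant-factor loss per recursion level, since an extra factor per level would multiply the final bound by $n^{\Theta(1)}$ and wreck the exponent; all the slack we are exploiting is the bare $224<225$, which is precisely why the relocating teams must be kept entirely out of the robber's reach rather than merely relied upon to behave sensibly while moving. Granting the coordination, $C(15n)\le 224\,C(n)$ holds and the theorem follows. (Any coarse board $k\times k$ with $\log_k(k^2-1)<1.999$, for instance $k\in\{15,16,17,18\}$, serves just as well, should one want more room for routing ropes.)
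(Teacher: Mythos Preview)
Your proposal is correct and follows essentially the same approach as the paper: the recursion $C(15n)\le 224\,C(n)$ via a $15\times 15$ tiling with phantom robbers and one fast-moving hole. The only real difference is in how the hole is routed. The paper keeps the hole parked in one of the two opposite corners; the board is divided by the diagonals $x+y=10N$ and $x+y=20N$ into three strips, and when the robber enters the strip containing the hole, the hole is slid along the fixed L-shaped boundary path to the opposite corner, with each cop chasing a designated phantom cop in its target slot (taking at most $2N$ turns). This makes the timing check concrete: the robber needs $2.5N$ turns to cross the middle strip, so no second relocation can be triggered before the first finishes. Your version replaces this with an abstract ``pick a rope at coarse-distance $\ge 5$'' evasion argument, which is conceptually the same but leaves the verification---including your acknowledged ``main thing to get right'' and the reactivation time after a slide---less explicit than the paper's fixed routing.
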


\begin{proof}
We show that $224^n$ cops can win on a $15^n$ square by induction on $n$. This then implies the result as $1.999 > \ln(224)/\ln(15)$. The base case is trivial. Let $N = 15^n$ and suppose it is true for $n$. We will have $224$ teams of $224^n$ cops.
\\
\\
We then tile the grid of side length $15N$ with $225$ $N$ by $N$ squares. On each of these sub-squares we have a phantom robber, defined to be on the vertex in our sub-square closest (with euclidean distance) to the real robber. We can see that this phantom robber never moves more than $2$ steps in a real robbers' turn. Therefore, this phantom robber is playing like a real robber and they coincide when the real robber is in the sub-square. We then have a team of $224^n$ phantom cops confined to this sub-square, playing to cover this phantom robber. What we will ensure is that whenever the real robber is in a given sub-square, one of our teams of cops has occupied exactly the phantom cops' positions. This therefore ensures, as the phantom cops always cover the phantom robber, that whatever sub-square the robber is in a real cop is on top of him.
\\
\\
In a `stationary state' each team of cops is occupying a sub-grid's phantom cops' positions, leaving one empty sub-grid either in the top right or the bottom left. We assume that in a stationary state the robber is not in the section of the grid with the empty square, where we have three sections according to the dotted lines $x+y=10N$ and $x+y=20N$. We can place the cops initially so that this is the case.
\\
\\
We then are in a `moving state' if the robber ever enters the section with the empty square: at this point we either move each team one along the bottom and right hand side if the robber is above the line $y=x$; or we move each team of cops one along the top and left hand side if the robber is below the line $y=x$. Individual cops in a team of cops each choose a phantom cop in the square they are moving to copy. They then can catch this phantom cop in at most $2N$ time as they can just simply chase each co-ordinate. After each cop does this we are back in a stationary state with the hole in the other corner.
\\
\\
In Figure 3, the red indicates the robber's position, the black indicates the empty cell and the grey indicates the cells `in motion'.
\\
\\
The numbers are chosen so that it is impossible for the robber to reach a square that has phantom cops without real cops on them in either stage. It is also not possible, as the gap between the diagonals takes the robber $2.5N$ time to traverse, for the robber to initialise a new moving stage while a previous one is not finished. This completes the induction.

\begin{figure}[H]
\centering
\scalebox{1.35}{
\begin{tikzpicture}
\draw[step=0.2cm,gray,very thin] (0,0) grid (3,3);
\fill[black] (0,0) rectangle (0.2,0.2);
\fill[red] (0.6,1.4) rectangle (0.8,1.6);
\draw[black,dashed] (0,2) -- (2,0);
\draw[black,dashed] (3,1) -- (1,3);

\draw[step=0.2cm,gray,very thin] (3.4,0) grid (6.4,3);
\fill[gray!50!white] (3.4,0) rectangle (6.4,0.2);
\fill[gray!50!white] (6.2,0) rectangle (6.4,3);
\fill[red] (0.4+3.4,1.2) rectangle (0.6+3.4,1.4);
\draw[gray,very thin] (3.4,0) -- (3.4,3);
\draw[black,dashed] (0+3.4,2) -- (2+3.4,0);
\draw[black,dashed] (3+3.4,1) -- (1+3.4,3);

\draw[step=0.2cm,gray,very thin] (6.8,0) grid (9.8,3);
\fill[black] (9.6,2.8) rectangle (9.8,3);
\fill[red] (0.4+6.8,0.8) rectangle (0.6+6.8,1);
\draw[gray,very thin] (6.8,0) -- (6.8,3);
\draw[black,dashed] (0+6.8,2) -- (2+6.8,0);
\draw[black,dashed] (3+6.8,1) -- (1+6.8,3);
\end{tikzpicture}
}
\caption{The moving process}
\end{figure}
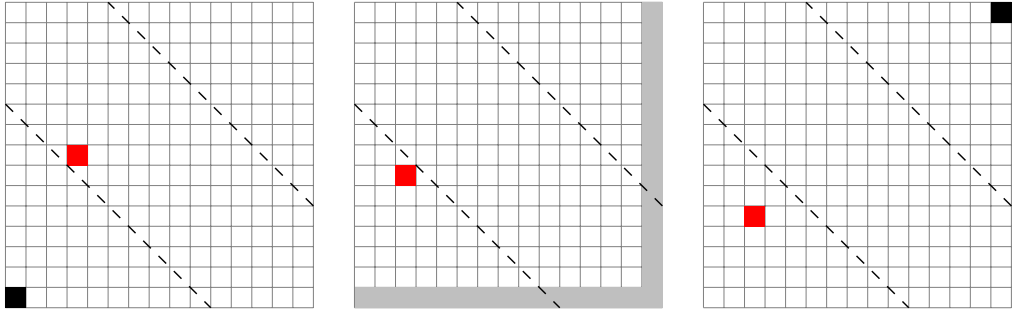

\end{proof}

This idea can be improved in many ways (e.g. one could simply take a $16$ by $16$ square with two holes). These all improve the upper bound very slightly. We would be surprised if there was a way to beat $n^{1.9}$ using a simple adaptation of this strategy and very surprised if there was a way to beat $n^{1.8}$.
\\
\\
We end this section with showing that Theorem 1.1 and Theorem 1.2 can be transplanted to Covering on the Torus. In fact, we show that Covering on the Torus is essentially the same as Covering on the Grid.

\begin{theorem}
Let $f(n)$ denote the number of cops needed to win Covering on the Grid, $[n]^2$. Let $g(n)$ denote the number of cops needed to win Covering on the Torus, $[n]^2$. Then $\frac{1}{4}f(n) \leq  g(n) \leq 4f(n)$. 
\end{theorem}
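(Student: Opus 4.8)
The plan is to prove $g(n)\le 4f(n)$ and $f(n)\le 4g(n)$ separately, each by a \emph{simulation} that turns a winning strategy on one board into a winning strategy on the other at the cost of a factor $4$. The tool throughout is the following. If $\Phi$ is a homomorphism between the two graphs (hence distance non-increasing, so it sends a cop move of length $\le 1$, or a robber jump of length $\le 2$, to a legal move), then cops on the target board running an optimal strategy there against the phantom robber $\Phi(R)$ can be pushed forward by $\Phi$ to cover the real robber $R$ whenever the target strategy seats a cop on $\Phi(R)$ --- provided $\Phi$ has a distance non-increasing section along the part of the board in play; when $\Phi$ is several-to-one and has no section I instead keep one \emph{team} of cops per branch of $\Phi$, so that together the teams always occupy every $\Phi$-preimage of the phantom's position. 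In two dimensions the folds I use are two-to-one in each coordinate, which is where the constant $4=2\times 2$ comes from.

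For $g(n)\le 4f(n)$: take the coordinatewise billiard fold $\Phi\colon\mathbb{Z}_n^2\to[m]^2$ of the cycle onto a path (the even or odd fold according to the parity of $n$; here $m\le n$, in fact $m\approx n/2$). It is a homomorphism, and each vertex of $[m]^2$ has at most four $\Phi$-preimages, namely the four pairs obtained by applying to each coordinate one of two explicit isometric embeddings $[m]\hookrightarrow\mathbb{Z}_n$. Run an optimal $[m]^2$ strategy with $f(m)$ cops against the phantom $\Phi(R)$ (which moves at most $2$ per turn), and keep four teams, team $(\pm,\pm)$ tracking each grid cop through the correspondingly-signed coordinate embeddings; the four avatars of a grid cop always sit on that cop's $\le 4$ preimages, so when the grid strategy catches $\Phi(R)$ some team sits on $R$. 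Hence $g(n)\le 4f(m)\le 4f(n)$, using that $f$ is non-decreasing --- itself an instance of the tool, since the map $[m+1]\to[m]$ that is the identity except for sending the last vertex to the previous one is a homomorphism with an isometric section, so simulating gives $f(m)\le f(m+1)$.

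For $f(n)\le 4g(n)$ I chain two reductions. \emph{Unfolding the grid:} the billiard fold $\Phi\colon\mathbb{Z}_{2n}^2\to[n+1]^2$ has fundamental domain isometrically embedded in $\mathbb{Z}_{2n}^2$, so the grid robber on $[n+1]^2$ has a genuine isometric section into it; chasing the lifted robber with $g(2n)$ cops on $\mathbb{Z}_{2n}^2$ and pushing forward by $\Phi$ gives $f(n+1)\le g(2n)$, and here one team suffices. \emph{Halving the torus:} the coordinatewise reduction $\pi\colon\mathbb{Z}_{2m}^2\to\mathbb{Z}_m^2$ is a genuine four-to-one covering map of king-graphs, so running an optimal $\mathbb{Z}_m^2$ strategy with $g(m)$ cops against $\pi(R)$ and keeping four teams --- the four sheet-lifts of the base cops, each team following the unique covering-lift of the base cops' trajectory --- gives $g(2m)\le 4g(m)$, since the four avatars of a base cop are exactly the four $\pi$-preimages of its location, one of which is $R$. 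Combining with monotonicity of $f$ gives $f(n)\le f(n+1)\le g(2n)\le 4g(n)$.

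Most of the work is routine: checking that the folds are homomorphisms and that the sections and coordinate embeddings are distance non-increasing, all of which reduces to the elementary fact that a segment of at most $\lfloor N/2\rfloor+1$ vertices embeds isometrically into the cycle $\mathbb{Z}_N$. The one point needing genuine care is that the billiard fold onto a path is a \emph{branched} cover --- one preimage over a fold line, two elsewhere --- so I must verify that the four coordinate-reflected teams stay exactly on the $\Phi$-preimage set of the base cop even as preimages merge and split at fold lines; this works because the two coordinate embeddings are individually isometric and their images are precisely the one-or-two preimages of each point, so the four teams' positions are always well defined and correct. The halving step sidesteps this, its path-lifting being automatic for a covering map. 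Once these checks are in hand, $\tfrac14 f(n)\le g(n)\le 4f(n)$ follows.
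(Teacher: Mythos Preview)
Your proof is correct, but the route differs from the paper's in a way worth noting. Both arguments rest on reflections, but the paper works entirely on the board $[n]^2$ itself: for $g(n)\le 4f(n)$ it defines four phantom robbers as the real robber together with its reflections in the two midlines, indexes them by quadrant, and has four teams of $f(n)$ grid cops each cover one phantom; since the grid is a subgraph of the torus these are legal torus cops, and the real robber always coincides with one phantom. For $f(n)\le 4g(n)$ it runs the dual trick directly: each torus cop is given three reflected avatars on the grid, and together the four avatars can simulate any torus move (including a wrap) by legal grid moves, so the torus strategy transplants with a factor of $4$. Your version instead folds to a smaller grid $[m]^2$ with $m\approx n/2$ for the first inequality, and for the second chains $f(n)\le f(n+1)\le g(2n)\le 4g(n)$ through a billiard unfold and the double cover $\mathbb{Z}_{2n}^2\to\mathbb{Z}_n^2$, invoking monotonicity of $f$ along the way. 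This is sound, and the homomorphism framework makes the $2^d$ generalisation to $[n]^d$ transparent; but it is more machinery than needed. The paper's direct reflection of cops (rather than passing through $g(2n)$) gives the second inequality in one step and avoids both the monotonicity lemma and the change of board size.
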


\begin{proof}
Suppose the $f(n)$ cops have a winning strategy on the grid. Then $4f(n)$ cops can win on the torus as follows: $4$ teams of $f(n)$ cops catch four robbers $R_{11}$, $R_{12}$, $R_{21}$ and $R_{22}$. These four phantom robbers are the original robber, the original robber reflected in line $x = (n+1)/2$, the original robber reflected in the line $y = (n+1)/2$ and the original robber reflected in both these lines. $R_{11}$ is chosen to be the phantom robber in the bottom left quadrant, $R_{12}$ is chosen to be the phantom robber in the bottom right quadrant and so on.
\\
\\
We see that these phantom robbers move as real robbers playing on the grid, so each team of cops can catch their phantom robber. We also see that at all times one of the phantom robbers is the real robber so the cops can win on the torus with $4f(n)$ cops. Similarly, if $g(n)$ cops can win on the torus, we consider the game played on the grid with $4g(n)$ cops, each cop being given $3$ additional cops, positions defined as with the phantom robbers in the paragraph above. This allows each cop to move as if they were on the torus, hence they can catch the robber - as the robber has strictly fewer move choices than on the torus.
\end{proof}

It is clear how this proof adapts to higher dimensions, with $4$ being replaced with $2^d$.

\section{Higher dimensions}
In this short section we first take a slight detour to a different question that ends up being very useful for both Covering in $[n]^d$ and Rugby. We consider the question of a robber in $\mathbb{Z}^d$ at the origin, and the cops choosing their starting position. The cops aim to catch the robber at time $T$. The robber has speed $2$.

\begin{theorem}
There exist constants $c_1$ and $c_2$, dependent on $d$, such that the number of cops needed to catch a robber at time $T$ in $d$ dimensions is between $c_1T^{d/2}$ and $c_2T^{d/2}$ for all $T$.
\end{theorem}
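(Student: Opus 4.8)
The plan is to reduce both bounds to the one‑dimensional game and then analyse that game.

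\textbf{Reduction to dimension one.} Since an $\ell_\infty$‑ball in $\mathbb{Z}^d$ is a product of intervals, a cop may run an independent strategy in each coordinate, and the projection of a legal robber move onto any coordinate is a legal move of a speed‑$2$ robber on $\mathbb{Z}$. Hence if $S\subseteq\mathbb{Z}$ is a set of starting positions from which $|S|$ cops catch the robber at time $T$ on $\mathbb{Z}$, then placing a cop at every point of $S^d$ wins on $\mathbb{Z}^d$: for each coordinate $i$ some $s_i\in S$ catches the $i$‑th coordinate of the robber, and the cop at $(s_1,\dots,s_d)$, playing the winning one‑dimensional strategy in coordinate $i$, sits on the robber at time $T$. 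So the upper bound follows from the one‑dimensional bound $O(\sqrt T)$. For the lower bound I will produce a randomised robber on $\mathbb{Z}$ that any single cop catches at time $T$ with probability $O(T^{-1/2})$; running independent copies of it in the $d$ coordinates yields a robber that any single cop in $\mathbb{Z}^d$ catches with probability $O(T^{-d/2})$ (the cop must win every coordinate, and the coordinates are independent), so a union bound shows no family of $o(T^{d/2})$ cops catches every outcome. Small $T$ is absorbed into the constants $c_1,c_2$, so it suffices to prove the two one‑dimensional bounds.

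\textbf{The upper bound on $\mathbb{Z}$.} Write $x_t$ for the robber's position after its $t$‑th move. The set of vertices the robber can still occupy at time $T$ is the interval $I_t=[x_t-2(T-t),\,x_t+2(T-t)]$, and a one‑line check shows $I_{t+1}\subseteq I_t$; so $(I_t)_{t\le T}$ is a nested, shrinking family of intervals collapsing to the single vertex $x_T$. The cops' job is to keep $I_t$ inside the union of their "reach" intervals $[y^{(t)}_i-(T-t),\,y^{(t)}_i+(T-t)]$, with enough local redundancy of cops near the centre of $I_t$ to win the last, exact captures. I would start $\Theta(\sqrt T)$ cops evenly spaced over $[-2T,2T]$ (there is room for this many at spacing $\Theta(\sqrt T)$) and let them condense onto $I_t$ as it shrinks: on each move a cop's reach interval loses width $2$ but may be slid within a window of width $2$, and applying this flexibility along the line of cops lets them follow the $\le 2$‑per‑step drift of $I_t$ by passing coverage down the line — the same "a hole moves faster than a cop" phenomenon used elsewhere in the paper. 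Near the end $I_t$ has width $\Theta(\sqrt T)$, exactly matched by $\Theta(\sqrt T)$ cops at $O(1)$ spacing; for the final stretch one maintains this $O(1)$ density around the robber and checks, via Hall's condition, that at each step the cops can move so as to occupy all five vertices the robber might move to, finishing with a cop on $x_T$. The budget $\Theta(\sqrt T)$ is precisely what makes the endgame regime "a width‑$\Theta(\sqrt T)$ interval covered at density $\Theta(1)$" affordable.

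\textbf{The lower bound on $\mathbb{Z}$.} A naive speed‑$2$ random walk is a poor robber: a single cop can shadow it, since the gap to the robber has negative drift whenever the cop chases, and then at the penultimate step the cop positions itself at distance $1$, $2$ or $3$ and guesses the robber's last move, catching it with constant probability. The robber must be both diffuse and genuinely hard to shadow. I would have it play (essentially) a random walk but, being adaptive, spend its speed‑$2$ budget to juke away from any cop that has manoeuvred within distance about $3$ of it shortly before time $T$; with only $o(\sqrt T)$ cops such jukes are rare, so they do not destroy the walk's $\Theta(\sqrt T)$‑wide spread, yet they prevent each cop's shadow‑and‑anticipate strategy from succeeding, pinning each cop's catch probability at $O(T^{-1/2})$. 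An alternative is a deterministic adversary argument: the robber maintains a large set of still‑possible final positions, each of which would need its own local cluster of cops to be threatened, and one shows $o(\sqrt T)$ cops cannot cluster near all of them.

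\textbf{The main obstacle.} The reduction is routine; the work is in the two one‑dimensional analyses, and both turn on the endgame. For the upper bound one must simultaneously keep the shrinking, drifting interval $I_t$ covered although its features move faster than a single cop (exploiting the redundancy of the caterpillar of cops) \emph{and} arrange the local Hall‑type conditions for the $\Theta(\sqrt T)$ exact captures at the end — all on a budget of $\Theta(\sqrt T)$ cops. For the lower bound the delicate point is building a robber that is at once spread out enough to make each cop's catch probability small and evasive enough that shadow‑and‑anticipate — which beats a plain random walk — still wins only with probability $O(T^{-1/2})$; balancing the cost of the occasional juke against the number of cops is exactly where the $\sqrt T$ threshold is fixed.
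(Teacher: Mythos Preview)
Your reduction of the upper bound to one dimension via products is exactly what the paper does, so that part matches. But the two one-dimensional analyses both diverge from the paper, and in different ways.

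\textbf{Upper bound.} Your caterpillar/Hall's-condition sketch is plausible but vague at exactly the point where the work lies: you do not say how the $\Theta(\sqrt T)$ cops are re-spaced as $I_t$ shrinks, nor why the endgame Hall condition actually holds. The paper sidesteps all of this with one clean observation you are missing: a cop starting at $c_0$ can guarantee to be at $\lceil (c_0+r_T)/2\rceil$ at time $T$, provided $|c_0-\lceil(c_0+r_T)/2\rceil|\le T$, simply by always stepping toward $\lceil (c_0+r)/2\rceil$ for the current robber position $r$. Applying this for time $3T/4$ to cops in arithmetic progression on $[-3T,3T]$ with gap $2^{n-1}$ (where $T=4^n$) halves the spacing and recentres the progression on $r_{3T/4}$, so one inducts on $n$. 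This replaces your whole endgame with a two-line recursion; your approach, as written, is not yet a proof.

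\textbf{Lower bound.} Here there is a genuine gap. First, the product argument does not give $O(T^{-d/2})$: the events ``cop matches the robber in coordinate $i$'' are \emph{not} independent, because the cop is adaptive and its $i$-th coordinate may depend on the robber's entire trajectory in all coordinates. Conditioning on the robber's other coordinates gives a valid one-dimensional cop strategy in coordinate $1$, but that only yields $P[\text{catch}]\le p$, not $p^d$. Second, you yourself note that a plain random walk fails against a shadowing cop, and your ``juke'' fix is not a strategy: with several cops nearby, juking away from one moves toward another, and the claim that jukes are rare enough not to kill the $\sqrt T$ spread is asserted, not argued. The paper avoids all of this with a short deterministic argument directly in $\mathbb{Z}^d$: at $T=4^n-1$, pick the open orthant containing the fewest cops and sprint $\tfrac{3}{2}\cdot 4^{n-1}$ along its diagonal; only cops originally in that orthant can still reach you, their number is at most a $2^{-d}$ fraction of the total, and $4^{n-1}-1$ time remains, so one inducts. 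No randomness, no coordinate independence, no endgame.
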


\begin{proof}
To prove the lower bound we will show by induction that for $T = 4^{n}-1$ the robber can beat $2^{d(n-1)}$ cops. The base case $n=1$ is simple. Suppose it is true for $n-1$. Consider the $2^d$ quadrants of $\mathbb{Z}^d$ and pick the one with the least cops in it. We then walk a distance of $\frac{3}{2}4^n$ into it along the diagonal, taking steps of length 2. We can then check that the only cops that can possibly catch us are the cops originally in the quadrant. We have $4^{n-1}-1$ time left and there are at most $2^{d(n-2)}$ cops that can catch us, so we can induct.
\\
\\
For the upper bound we'll again induct. We prove it in one dimension as it multiplies up to higher dimensions. The cops' strategy uses the following fact: a given cop starting at $c_0$, with the robber starting at $r_0$, can ensure that they end up at $\lceil(c_0+r_1)/2\rceil$ in time $T$ so long as $|\lceil(c_0+r_1)/2\rceil - c_0| \leq T$, where $r_1$ is the position of the cop at $T$.
\\
\\
The way the cop can achieve this is to always take one step toward $\lceil(c_0+r)/2\rceil$, where $r$ is the current position of the robber. Then when the cop reaches that point they just take the steps needed to stay on $\lceil(c_0+r)/2\rceil$. This is possible as $\lceil(c_0+r)/2\rceil$ changes by at most $1$ each robber move. If the cop is not able to ever reach $\lceil(c_0+r)/2\rceil$ then $\lceil(c_0+r)/2\rceil$ was always to the left or right of the cops' position so they took all the steps in the same direction, hence $|\lceil(c_0+r_1)/2\rceil - c_0| > T$
\\
\\
To prove the upper bound, we show that for $T = 4^n$, and for cops linearly spread out on $[-3T,3T]$ with common difference $2^{n-1}$, the cops can always catch the robber. The base case $n=1$ is immediate. Each cop follows the above strategy for $3T/4$, and we have that all the cops initially in $[r_1-3T/2, r_1+3T/2]$ have moved to $\lceil(c_0+r_1)/2\rceil$. As the initial position of each of the cops was linearly spread, so are these now with common difference $2^{n-2}$. They fully cover $[r_1-3T/4, r_1+3T/4]$, and we have $4^{n-1}$ time remaining. This completes the induction, using at most $6\times 2^{n+1} +1$ cops for $T = 4^n$.
\\
\\
In $d$ dimensions we take $(6\times 2^{n+1} +1)^d$ cops, the product of all the individual co-ordinate's strategies. This completes the proof.
\end{proof}

Our next corollary will show how this can be used to get bounds for Covering in $[n]^d$. The upper bound this gives is the best bound we have been able to find for dimensions above $2$, adapting Theorem 1.2 gives a worse bound. Theorem 1.1 can be simply translated into some higher dimensions to give a better lower bound than Corollary 2.2. Though as $d$ goes to infinity Corollary 2.2 gives a better lower bound.

\begin{corollary}
There exists $c_1$ and $c_2$, dependent on $d$, such that the number of cops needed to win Covering on $[n]^d$ is between $c_1n^{\frac{d}{2}}$ and $c_2 n^{\frac{d}{2}+1}$.
\end{corollary}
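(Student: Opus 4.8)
The plan is to derive both inequalities directly from Theorem 2.1, run at the time scale $T = \Theta(n)$. I would fix $T$ to be the largest power of $4$ with $16T \le n$ (so $T = \Theta(n)$ and any box of side $O(T)$ sits well inside $[n]^d$), and write $T = 4^m$. Theorem 2.1 then supplies, for this $T$, a cop strategy using $O(T^{d/2}) = O(n^{d/2})$ cops that catches a robber at time exactly $T$, and a robber strategy beating $2^{d(m-1)} = \Theta(T^{d/2}) = \Theta(n^{d/2})$ cops; the corollary will follow by lifting each of these to the Covering setting.

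\emph{Lower bound.} Suppose Covering on $[n]^d$ is played with fewer than $2^{d(m-1)} = \Theta(n^{d/2})$ cops. The robber first walks to the centre vertex of $[n]^d$, which costs at most $n/2$ of his turns; what the cops do during this is irrelevant. From the centre each of the $2^d$ quadrants contains a sub-box of side at least $n/2 \ge 2T$, which is exactly the room the evasion strategy of Theorem 2.1 needs (it walks total distance $O(T)$ along a diagonal). The robber then runs that strategy verbatim for $T$ further turns; it guarantees that on the final turn no cop sits on him, whatever the cops do. So the cops fail to cover him at that instant and hence do not win, giving the lower bound $\Omega(n^{d/2})$.

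\emph{Upper bound.} I would use $L := 2n$ time-staggered teams $\mathcal T_0,\dots,\mathcal T_{L-1}$, each of $O(T^{d/2}) = O(n^{d/2})$ cops, for a total of $O(n^{d/2+1})$. Team $\mathcal T_j$ is charged with sitting on the robber at all turns $t \equiv j \pmod L$ (past an initial warm-up). Each team cycles with period $L$: having just finished a capture with all its cops on the robber, it marches back to the fixed spread-out configuration $\mathcal C$ of Theorem 2.1's capture strategy — the spacing-$\Theta(\sqrt T)$ sub-lattice of $[n]^d$ — which is reachable within $n$ turns because $[n]^d$ has diameter $n-1$; it waits there if need be, then runs the exact-capture strategy for $T$ turns, again ending with all its cops on the robber. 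Since $n + T \le L$ the cycle fits, and as the teams are offset by one turn, at every post-warm-up turn some $\mathcal T_j$ is exactly completing a capture and so covers the robber. The warm-up (of length $O(n)$, before any team's first capture) is the usual sense in which one asks whether the cops can win Covering.

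\emph{The main obstacle.} The two strategies in Theorem 2.1 are written for the unbounded lattice $\mathbb Z^d$ with the robber centred in the cops' spread, whereas on $[n]^d$ the robber may sit against the boundary. For the lower bound I sidestep this by centring the robber first. For the upper bound the point to verify is that the capture strategy still works once its lattice $\mathcal C$ is clipped to $[n]^d$: this should go through because the boundary constrains the robber symmetrically, so the reachable region the cops ``close in on'' is clipped the same way and only constant factors in the cop count are lost. Carrying out that adaptation, together with the staggering bookkeeping, is the only non-routine step.
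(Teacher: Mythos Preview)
Your approach is correct and essentially matches the paper's: both bounds come straight from Theorem~2.1 at scale $T=\Theta(n)$, the upper bound via $\Theta(n)$ time-staggered teams each running the exact-capture strategy and resetting in between (the paper uses $5n/4$ teams and a phantom robber that starts at the origin and chases the real one, in place of your full-grid sublattice configuration, and handles the boundary by exactly the projection-to-nearest-grid-point trick you anticipate). One small point: Covering does not grant a warm-up---coverage is required from the cops' first turn---so your last sentence is not quite right; the paper instead has team $i$ for $0\le i<n/4$ set up to catch at time $i$ directly, and the same routine fix (initialising team $j$ in the fixed-time configuration for parameter $j$, centred on the robber's known start) patches your version.
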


\begin{proof}
The lower bound is instant as the robber just ensures that at time $n/4$ no cops are on him by using the fixed time strategy.
\\
\\
The cops' strategy requires some more thought. We split the cops into $5n/4$ teams, each with enough cops to catch the robber in time $n/4$. Team $i$ is tasked with being on top of the robber at times congruent to $i$ modulo $5n/4$, without loss of generality $n$ a multiple of $4$. (At some points this strategy has cops outside the grid, but we can just consider the closest point on the grid to the cop and move the cop there).
\\
\\
Teams $i$ for $0 \leq i < n/4$, set up to catch the robber at time $i$ and catch the robber at that time. They then move to starting positions to catch a robber starting at the origin at time $n/4$, which takes them time at most $n$. Then at time $n + i$ they play the fixed time strategy to catch a phantom robber at time $5n/4 + i$. This phantom robber starts at the origin and chases the real robber, tracing the real robber's steps afterwards. When they catch this phantom robber, the phantom robber has caught the real robber so in fact they catch the real robber at time $5n/4 + i$. They then repeat to catch at $2(5n/4)+i,...$. The other teams simply set up for a robber starting at the origin and perform the same strategy. This ensures a cop is on the robber at all possible times and uses at most $c_2 n^{\frac{d}{2}+1}$ cops, for some $c_2$.
\end{proof}

This finishes the bounds we have been able to find for this question. In Section 4, we ask some natural follow on questions. The last thing to remark on is the unimportance of speed 2. All these proofs can easily be adapted to a quicker robber, giving different bounds. For high enough speeds Theorem 1.1 becomes worse than Corollary 2.2. But importantly, Theorem 1.2 will always show that the answer is $o(n^2)$. We'll see in the next section that the robber's speed becomes more important for Rugby.

\section{Rugby}
Intuitively, we may think that the best way to block the robber from winning Rugby is for all cops to remain with the same first coordinate and reduce it to Covering on the Grid, but we begin with another corollary of Theorem 2.1 which gives us a better strategy than this. In this section, we take $(0,0,0)$ as the centre of the `tunnel' $\mathbb{Z} \times [n]^d$ (without loss of generality $n$ is odd).

\begin{corollary}
There exists $c$ such that the number of cops needed to win Rugby in $d$ dimensions is at most $cn^{\frac{d+1}{2}}$.
\end{corollary}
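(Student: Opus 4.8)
The plan is to obtain the bound as a quick consequence of the fixed-time strategy of Theorem 2.1, applied in its dimension-$(d+1)$ form inside the `tunnel' $\mathbb{Z}\times[n]^d$ with time parameter $T=n$. The underlying point is that a robber trapped in the cross-section $[n]^d$ must travel a distance $\Theta(n)$ along the first coordinate before he can pass plane $\{1\}$, and at speed $2$ this buys the cops $\Theta(n)$ turns; the dimension-$(d+1)$ fixed-time construction with $T=\Theta(n)$ uses exactly $\Theta(T^{(d+1)/2})=\Theta(n^{(d+1)/2})$ cops, which is the claimed count.

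The construction I would use: fix $T=n$, let the `slab' $S$ be the first-coordinates from $-6n$ to $0$, and pre-position $\Theta(n^{(d+1)/2})$ cops in the start configuration of the dimension-$(d+1)$ fixed-time strategy of Theorem 2.1, translated so that its centre is the point with first coordinate $-3n$ and cross-section at the centre of $[n]^d$. Taking that construction as a product over the $d+1$ coordinates (as in the proof of Theorem 2.1), this start configuration is a grid of spacing $\Theta(\sqrt n)$ filling $S\times[n]^d$; in each cross-section coordinate one uses that the midpoint-chasing move keeps cop and robber inside $[n]$ once both are there, so the grid genuinely fits in the tunnel. The cops hold this configuration until the first turn $t_0$ at which the robber's first coordinate is $\geq -3n$, and from turn $t_0$ onwards they run the fixed-time strategy for $n$ turns.

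For correctness: if the robber is ever to pass plane $\{1\}$ then his first coordinate must reach $-3n$, so, since he starts at first coordinate $-C$ with $C$ enormous (in particular $-C<-6n$), the turn $t_0$ exists; and at turn $t_0$ his first coordinate is $-3n$ or $-3n+1$, essentially the centre of the slab. During the ensuing $n$ turns his first coordinate can change by at most $2n$, so it stays in $[-5n,-n]\subseteq S$: he never leaves the region $S\times[n]^d$ over which the cops are spread, and he stays non-positive throughout, so in particular he does not pass plane $\{1\}$. Hence the fixed-time strategy runs to completion and, by Theorem 2.1, a cop occupies the robber's vertex at turn $t_0+n$. The robber is therefore caught before passing plane $\{1\}$, so $\Theta(n^{(d+1)/2})$ cops win Rugby in $d$ dimensions.

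A word on where the care goes. The tempting alternative --- re-launching a fixed-time strategy every time the robber nudges forward --- would have to cope with the robber's freedom to dart back and forth, and would apparently need several interleaved copies of the formation to keep a rested one on hand; widening the slab to $6n$ first-coordinates (against a speed-$2$ reach of only $2n$ over $n$ turns) is precisely what makes one copy suffice, since once the robber has pushed to the slab's middle he cannot get back out before the strategy finishes. Given that, the one genuinely non-automatic step is the robustness of Theorem 2.1's reactive strategy to the robber starting at an arbitrary point of $[n]^d$ rather than at the centre; this should hold because $[n]$ is narrower than the width $3T/2$ appearing at each stage of the recursion, so every cop is always within range of the robber's current position. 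Verifying this, together with the routine projection of any notionally out-of-tunnel cops back onto the grid as in Corollary 2.2, is all that remains.
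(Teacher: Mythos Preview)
Your proposal is correct and takes essentially the same approach as the paper: invoke the $(d{+}1)$-dimensional fixed-time strategy of Theorem~2.1 with $T=\Theta(n)$, triggered the first time the robber's $x$-coordinate crosses a designated plane, so that he is caught before he can advance past $x=0$. The only cosmetic differences are that the paper takes $T=\lfloor n/4\rfloor$ and triggers at $x=0$, handling the robber's unknown cross-sectional position via the phantom-robber device of Corollary~2.2, whereas you take $T=n$, trigger further back at $x=-3n$, and instead argue directly that the midpoint-chasing strategy is robust to an off-centre start.
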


\begin{proof}
We simply just use enough cops to catch the robber in time $\lfloor n/4\rfloor$ starting at $(0,0,0)$. If the robber's $x$ co-ordinate is ever 0 then we can employ the same strategy as in Corollary 2.2 to catch the robber. Therefore we prevent the robber from ever passing $x$ co-ordinate $\lfloor n/2 \rfloor$.
\end{proof}

This gives us a way of beating the naive strategy of $n^d$ cops occupying $\{0\}\times [n]^d$ in dimensions 2 and higher. For high enough speeds this uses closer and closer to $n^{d+1}$ cops; luckily we can revert to our strategies in Theorem 1.2 to beat this naive strategy. This raises the question of whether $o(n)$ cops can win $1$ dimensional Rugby. Our next theorem, proved using a similar method to Theorem 1.2,  shows that this is the case.

\begin{theorem}
We have that $O(n^{0.999})$ cops can win the game of Rugby in 1 dimension.
\end{theorem}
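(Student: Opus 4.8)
The plan is to adapt the self-similar, hole-moving construction of Theorem 1.2 to the one-dimensional Rugby board $\mathbb{Z}\times[n]$. The fundamental difficulty in Rugby is that the robber has an unbounded first coordinate, so we cannot tile a bounded board; instead we will build, by induction on $k$, a strategy using roughly $a^k$ cops that defends a `window' consisting of a constant number of consecutive width-$n$ slabs of $x$-extent $b^k$, where $a/b$ governs the exponent. The inductive step glues together a bounded number of width-$b^{k-1}$ sub-windows, with the robber (or rather a phantom robber for each sub-block, defined as the closest point of that block to the real robber, which moves at speed at most $2$ exactly as in Theorem 1.2) tracked inside each block. Crucially, because the robber only moves forward along the $x$-axis in the long run, we never need to `refill' a block once the robber has passed it: we can recycle those teams of cops to extend the defended window further to the right, maintaining a hole that stays ahead of the robber.

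The key steps, in order, are: (1) set up the inductive statement precisely — with explicit constants playing the role of the $15$, $225$ and $33$ in Theorem 1.2 — asserting that $a^k$ cops can hold a window of a fixed number $m$ of width-$b^{k-1}n$ slabs against a robber entering from the left. (2) Show the stationary state: each team sits on the phantom-cop positions of one slab, with one slab empty (the hole), and the robber is kept out of the section containing the hole by a diagonal-style barrier argument transplanted from Theorem 1.2, using the gap between barriers to guarantee the robber cannot traverse fast enough to break in. (3) Show the moving state: when the robber threatens the hole, shift every team one slab along the chain toward the hole, so that in effect no cop moves far but the hole migrates to the far end — this is exactly the `fast hole' of Figure 2, and each cop catches its chosen phantom cop in time $O(b^{k-1}n)$ by chasing the coordinate, which is the content that must fit inside the time the robber needs to cross a slab. (4) Handle forward recycling: once the robber has moved past the leftmost slab of the window, re-task that now-idle team to set up a new slab on the right, so the window slides forward indefinitely. (5) Unwind the recursion: counting cops gives $a^{\log_b(\text{something})}$, and choosing the geometry so that $\log a/\log b < 0.999$ yields the bound; as in Theorem 1.1 / 1.2 this reduces to checking an inequality among a few constants, which one verifies numerically.

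The main obstacle I expect is step (3)–(4) interacting with the \emph{timing}: in Theorem 1.2 the board is finite so a moving state always completes before another can start, but in Rugby the robber keeps advancing, so I must ensure the window slides right at least as fast as the robber advances \emph{and} that a hole-shift (which costs time proportional to the current slab width $b^{k-1}n$) completes before the robber reaches the next slab whose cops are in motion. This forces the constants $m$ (number of slabs in the window) and the diagonal-gap constant to be chosen large enough relative to $b$ and the robber's speed $2$ — essentially the same kind of bookkeeping as the `$2.5N$ to traverse the gap' estimate in Theorem 1.2, but now also balanced against the forward drift. A secondary but routine point is integrality of the slab boundaries across scales, handled (as the paper already flags for Theorem 1.1) by inflating the additive constant `$33$' at each level. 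Once these timing inequalities are pinned down, the induction closes and the exponent $0.999$ drops out of the arithmetic.
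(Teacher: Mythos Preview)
Your recursion is pointed in the wrong direction, and this is a genuine gap. You take slabs of full height $n$ and varying $x$-extent $b^k$, with the induction on $k$ governing the $x$-width of a defended window. But in Rugby the cops only need to stop the robber from crossing one fixed plane; there is no reason to slide a window rightwards forever, and more to the point the parameter that must shrink under the recursion is the tunnel width $n$, not an $x$-extent. Your base case is a single team defending an $n$-by-$b$ slab, which already costs on the order of $n$ cops, and nothing in your scheme ever brings the dependence on $n$ below linear: the quantity $a^k$ you track is either independent of $n$ or (through the base case) at least $n$. If you instead meant horizontal strips so that each sub-team plays Rugby on a narrower tunnel, then your step (4) ``forward recycling'' no longer makes sense, and with a linear stack of strips the hole cannot be moved from one side of the robber to the other without putting the robber's own strip into motion.

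The paper's argument supplies exactly the two ingredients you are missing. First, a \emph{vertical tracking} phase (Stage 1): a rectangular net built from sub-tunnels of width $n/10000$ --- so the recursion is on the tunnel width --- keeps its $y$-centre at $\lfloor r_y/2\rfloor$; this target moves at speed at most $1$, so the net can follow it and is guaranteed to straddle the robber whenever his $x$-coordinate approaches $0$. Second, the net is a \emph{closed loop} (two long vertical sides joined by horizontal ends), which is what allows the holes in Stage 2 to be routed \emph{around} the robber rather than through him; a linear chain cannot do this. The paper also flags a subtlety your phantom-robber reduction glosses over: ``each sub-block stops its phantom robber from crossing'' does not directly imply the real robber is stopped, and the paper deliberately phrases the inductive claim as ``robber is trapped inside a net'' to side-step that. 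Your outline would need all three of these pieces to close.
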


\begin{proof}
We prove that for a vertical side length of $10000^n$, we need only $2\times9900^n$ cops to catch the robber or prevent the robber from passing $x=0$. We allow the cops to go above and below the tunnel, as this adapts to a valid strategy as in Corollary 2.2. 
\\
\\
\begin{figure}[H]
\centering
\scalebox{1.5}{
\begin{tikzpicture}
\draw[step=0.2cm,gray,very thin] (0,0) grid (6,6);
\fill[red] (0.4,4.6) rectangle (0.6,4.8);
\fill[gray] (5.8,4.6) rectangle (6,5.4);
\fill[gray] (5.8,3.6) rectangle (6,4.4);
\fill[gray] (5.8,2.6) rectangle (6,3.4);
\fill[gray] (5.8-0.6,4.6) rectangle (6-0.6,5.4);
\fill[gray] (5.8-0.6,3.6) rectangle (6-0.6,4.4);
\fill[gray] (5.8-0.6,2.6) rectangle (6-0.6,3.4);
\fill[gray] (5.8-0.6,5.4) rectangle (6,5.6);
\fill[gray] (5.8-0.6,5.4-3) rectangle (6,5.6-3);
\end{tikzpicture}
}
\caption{Snapshot of stage 1 (not to scale)}
\end{figure}
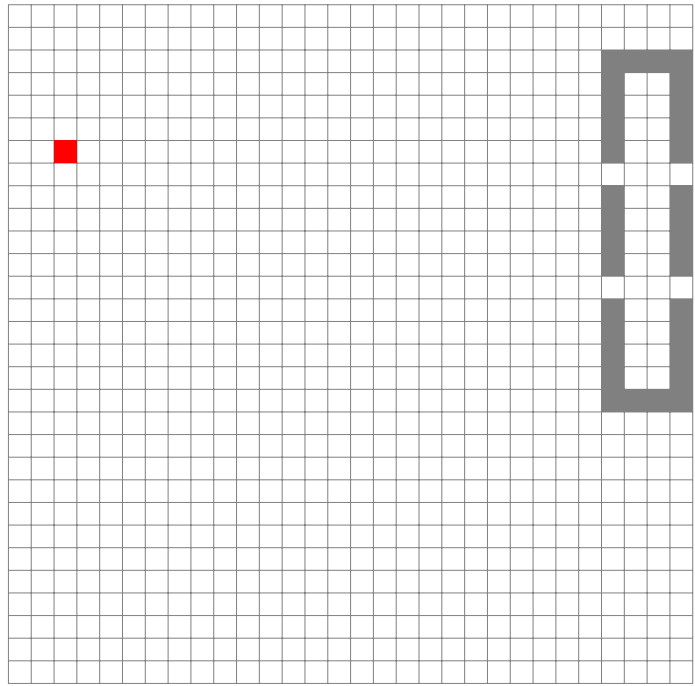
We'll walk through $n=1$ first. We create a net of cops, a rectangle of vertical side length $5050$ and horizontal side length 30. This initially has its right edge on $x = 0$ and has the centre of the net with $y$ co-ordinate $\lfloor \frac{n}{4} \rfloor$ (as the robber starts at $(-C,\lfloor\frac{n}{2}\rfloor)$). The cops move up and down to ensure the centre of this net has $y$ co-ordinate at $\lfloor r/2 \rfloor$ where $r$ is the $y$ co-ordinate of the robber. This ensures that the net always has one cop 25 above the robber and one cop 25 below the robber.
\\
\\
If the robber never reaches $x$ co-ordinate greater than $-6$ then we've won. If it ever does so (ignoring the fact it would have been caught by the left side of the net), it's trapped in our net, and we have stopped it from ever getting past $x=0$.
\\
\\
We now inductively claim that, for the game of Rugby with side length $10000^n$, with $2\times 9900^n$ cops, we can ensure that the robber never passes $x = -6\times 10000^{n-1}$, or the robber is trapped in a net of cops, i.e. the convex hull of a collection of cops. The base case is done.
\\
\\
The net is a rectangle of vertical side length $5050\times 10000^{n-1}$ and horizontal side length $30\times 10000^{n-1}$. Each sub-square has enough cops for the strategy with $n-1$. The Theorem 1.2 type idea is that once every 28 sub-squares on the vertical edges we have a gap where we have shadow cops. Therefore this uses less than $2\times9900^n$ cops. Each sub-square is playing the strategy `facing-in', trapping the robber inside the net.
\\
\\
In Figure 4 the red indicates the robber and the grey the cops. In stage 1, we use the exact same strategy as already described to ensure that if the robber ever crosses $x=-6\times 10000^{n-1}$ it is in the net and has $25$ sub-squares above and below it.

\begin{figure}[H]
\centering
\begin{tikzpicture}
\draw[step=0.2cm,gray,very thin] (0,0) grid (10,2);
\fill[red] (0,0.2) rectangle (0.2,0.4);
\fill[black] (0,0) rectangle (1.2,0.2);
\fill[black] (1.4,0) rectangle (6.8,0.2);
\fill[black] (7,0) rectangle (10,0.2);

\draw[step=0.2cm,gray,very thin] (0,0-3) grid (10,2-3);
\fill[red] (0.8,0.2-3) rectangle (1,0.4-3);
\fill[black] (0,0-3) rectangle (1.2,0.2-3);
\fill[gray] (1.2,0-3) rectangle (10,0.2-3);

\draw[step=0.2cm,gray,very thin] (0,0-6) grid (10,2-6);
\fill[red] (1.6,0.2-6) rectangle (1.8,0.4-6);
\fill[black] (0,0-6) rectangle (6.8,0.2-6);
\fill[black] (7,0-6) rectangle (10,0.2-6);
\end{tikzpicture}
\caption{Rotated snapshots of stage 2}
\end{figure}
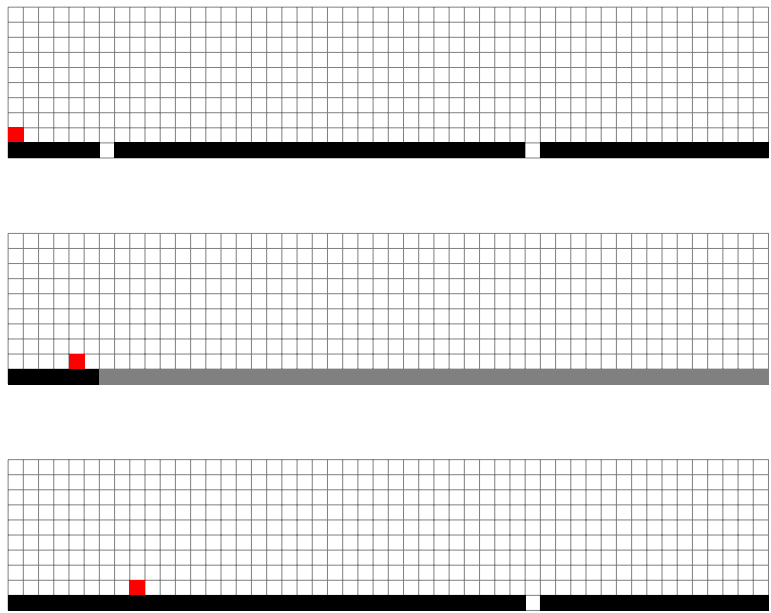

Stage 2 begins if the robber ever crosses $x=-6\times 10000^{n-1}$. Now, if the robber is ever 6 sub-squares away from an empty square, then we shift the empty sub-square to the top or the bottom as in Theorem 1.2, taking the route further away from the robber (this is the use of having a net; we are able to transport the cops around with the action taking place away from the robber). Then if the robber ever moves more than 12 away from the sub-square we undo this, again taking the route further away from the robber. Also, finally, note we will not have any empty squares on the top $25$ or bottom $25$ squares to ensure that no squares are empty when the robber reaches the top or bottom.
\\
\\
The gaps of 28 sub-squares ensures that we never have two strategies happening at once. We note that it takes potentially time $3\times 10000^{n-1}$ to fully complete the move as some cops will have to move across an empty sub-grid, so the robber can only move 6 sub-squares during a strategy playing out. We then have that if the robber were ever to escape the larger net, it would have to cross a side, therefore being $6 \times 10000^{n-2}$ away from the edge of the net and getting trapped in a smaller net. So the induction is complete.
\end{proof}
In comparison with Theorem 1.2, we need to be more careful with the final step. It could be the case that all sub-squares did their job blocking the shadow robber from getting past, but in fact each sub-square caught the shadow robber at a point where the robber was not there. Theorem 3.2 side-steps this issue as there is a global condition on the robber's co-ordinates when it gets trapped in the smaller net.
\\
\\
Like Theorem 1.2, this can also be improved slightly. Again we do not think we can take it that far. What is interesting is how dependent on speed 2 this is: to apply stage 2 we need to have some net that loops back on itself. To achieve this we needed stage 1 to use only $\sim n$ cops, and this breaks for higher speeds, needing $\sim 2n(s-1)/s$ cops. 
\\
\\
We think it will be hard to show that regardless of the robber's speed, $o(n)$ cops suffices to win Rugby, using a Theorem 1.2 type argument. We need to use the second dimension in order to be able loop the cops around, and this introduces redundancy so we naturally use more than $n$ cops and the fixes we have found for this all rely on the robber having some small speed.
\\
\\
Now we move on to a lower bound for 1 dimensional Rugby. We have to be much more careful than the other bounds in this paper as it's hard to get a handle on the cops' positions. We can easily see how a $\ln(n)$ type bound can work: the robber deals with the cops on much different time scales $k,k^2,k^3...$ (enumerating them $1,2,..,i$) running straight to the right and up/down away from the cop $i$ when it's $k^i$ away. The robber gives priority to the smaller timescales, and so long as $k$ is large enough the deviation possibly caused by $1,2,...,i-1$ cannot possibly effect $i$ too much. We use this idea in Theorem 3.3.
\\
\\
What we do is run straight to the right and have intervals $I_1,I_2,...$ at a distance $k,k^2,...$ away from the robber, each of width $2.2k^i$. We care only about the cops that could have a chance of catching the robber - so only count a cop as being in $I_i$ if there is a sequence of moves where he catches the robber. If a big chunk of cops are in $I_i$ together, say more than $t^i$ cops, then we can halve the number of these cops that reach $I_{i-1}$ by moving up or down. Using this we can say that if there were more than $t^{i+1}$ cops in $I_i$ then there were more than $t^{i+2}$ cops in $I_{i+1}$ etc.. If this holds then we prove a $\sim n^{(\ln(t)/\ln(k))}$ lower bound. For this we need the width of the $I_i$ to scale correctly so that cops together in $I_i$ had to be together in $I_{i+1}$.
\\
\\
We need some measure of how many requests we can get from $I_{j}$, $j<i$, when we are performing a request from $I_i$. We also need to ensure we cannot get another request from $I_i$ whilst we are already performing one. We do both of these by looking back and saying there would have been too many cops in some previous interval. One final thing we need to ensure is that we never go above or below the tunnel. We can do this by counting how many requests we can possibly have in total. In the theorem below, `we' will be in reference to the robber.
\begin{theorem}
In the 1-dimensional game of Rugby, we need $\Omega(n^{0.03})$ cops, for sufficiently large $n$.
\end{theorem}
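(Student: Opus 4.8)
The plan is to hand the robber an explicit evasion strategy and then show that a cop team which catches a robber playing it must have had at least $n^{0.03}$ members. The robber runs straight to the right at full horizontal speed and every so often performs a vertical ``juke''; all of the difficulty lies in scheduling the jukes and in bounding how much they perturb the run. In what follows ``we'' is the robber.

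Fix large absolute constants $k$ and $t$ with $\ln t/\ln k > 0.03$. For $i\ge 1$ call a cop a member of the \emph{scale-$i$ window} $I_i$ if its $x$-coordinate currently lies in an interval of width $2.2k^i$ centred roughly $k^i$ ahead of the robber and its $y$-coordinate lies within $O(k^i)$ of the robber's height; and call a cop \emph{threatening} if, given the robber's commitments so far, there is a continuation of play after which that cop reaches the robber's vertex. Only threatening cops are ever counted. The first thing to check is the \emph{nesting property}: because the robber moves horizontally at speed $2$ while a cop moves at speed at most $1$, the gap from a cop ahead of the robber to the robber closes at a rate between $1$ and $3$ per turn, and the widths $2.2k^i$ and spacings $k^i$ are chosen precisely so that any cop which is threatening and in $I_i$ when window $I_i$ scans it was threatening and in $I_{i+1}$ when $I_{i+1}$ scanned it earlier. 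This is the one rigid constraint tying $k$ to the geometry.

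The engine is a \emph{juke lemma}: if, when window $I_i$ activates, it contains more than $t^{i+1}$ threatening cops, then by moving up or down over the next $O(k^i)$ turns — a vertical displacement of $O(k^i)$, much smaller than $k^{i+1}$ and well inside the window's vertical extent — the robber can guarantee that at most half of those cops are still threatening by the time window $I_{i-1}$ scans them. This is the ``a hole moves faster than a cop'' phenomenon of Theorem~1.2 applied to heights: the robber changes height by almost $2$ per turn, each cop by at most $1$, so a correctly sized vertical move puts a constant fraction of the clump permanently out of vertical reach. Feeding the juke lemma into the nesting property yields the recursion ``more than $t^{i+1}$ threatening cops in $I_i$ $\Rightarrow$ more than $t^{i+2}$ threatening cops in $I_{i+1}$'', the factor-$t$ margin absorbing both the halving and the slack from cops of neighbouring scales drifting into the window. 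A capture means some clump defeats the robber at scale $1$, which by the contrapositive of the scale-$1$ analysis requires more than $t^{2}$ threatening cops in $I_1$; iterating the recursion up the scales then forces more than $t^{I_{\max}}$ cops, where $I_{\max}$ is the number of usable scales. That number is $\Theta(\log_k n)$: once $k^i$ exceeds the tunnel height $n$, the window's vertical extent is capped by the tunnel and a juke can no longer shed a constant fraction, so the recursion runs only for $i\le\log_k n$. Hence a winning cop team needs more than $t^{\log_k n}=n^{\ln t/\ln k}>n^{0.03}$ cops; and if the robber is never caught it runs to $x=+\infty$ and wins Rugby outright. Either way, fewer than $n^{0.03}$ cops cannot win.

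The main obstacle is the simultaneous bookkeeping. I would need to (i) bound the number of scale-$j$ juke demands with $j<i$ that can arise while a scale-$i$ juke is in progress, by the same look-back device (too many such demands would force an impossible number of cops in an earlier window), so that the jukes interfere only mildly and the committed speed-$2$ run is barely perturbed; (ii) rule out a fresh scale-$i$ demand before the current one completes, again by look-back; and (iii) control the total vertical excursion by summing the $O(k^i)$ cost of each scale-$i$ juke against a bound on the number of scale-$i$ demands (itself at most the cop count divided by $t^{i+1}$), and verify the total stays below $n/2$ so the robber never leaves $[n]$. The gaps of width $\sim k^i$ between the active part of $I_i$ and the window $I_{i-1}$ are exactly what buy the time to finish a juke before the next scale clamours for attention, and forcing every inequality among the widths $2.2k^i$, the thresholds $t^i$, the spacings $k^i$ and the vertical budget to hold at once is why the exponent comes out as a mere $0.03$. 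The proof finishes by exhibiting explicit $k$ and $t$ meeting all these inequalities with $\ln t/\ln k>0.03$.
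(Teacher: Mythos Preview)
Your outline is the paper's own argument: geometric windows $I_i$ at distances $\sim k^i$ with widths $\sim 2.2k^i$, thresholds $\sim t^i$, a ``request'' (your juke) that halves the threatening cops, look-back to rule out overlapping requests and to drive the recursion up the scales, and a final vertical-budget check; the paper takes $t=\sqrt{3/2}$ and $k=700$, so your phrase ``large absolute constants $k$ and $t$'' is slightly off for $t$ (the halving-versus-enlarged-window step pins $t$ near $\sqrt{3/2}$), but your closing sentence about exhibiting explicit $k,t$ satisfying all the inequalities is exactly how the paper finishes.
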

\begin{proof}
We do this for general $k$, and fix it at the end. We have $t=\sqrt{3/2}$. Our robber starts far enough to the left such that all cops are at least $n$ to the right of the robber, and in the centre of the tunnel. At all times the robber moves to the right $2$ squares. We define intervals $I_1,I_2,...,I_m$ of cops as follows: $I_i$ is the collection of cops that have $x$ co-ordinate between $r_x + 0.9k^i$ and $r_x+ 3.1k^i$, where $r_x$ is the robber's current $x$ co-ordinate and with $y$ co-ordinate such that it is possible that the cop could reach the robber. We take $m = \lfloor\log_k{\frac{n}{2}}\rfloor$. Finally, note that the $x$ co-ordinate of a cop changes by between $-1$ and $-3$ relative to the robber after both the cops and the robber have moved.
\\
\\
We then get a `request' from $i$ whenever there are at least $\frac{1}{2}t^i$ cops in $I_i$ that are not currently `bad'. We now call these cops `bad' - the $1/2$ is there just to ensure that we never have cops in $I_1$. To complete the request from $i$ we need to move up or down so that half the bad cops have no chance to reach the robber regardless of the robber's and cops' moves. We also need to achieve this before the cop reaches $I_{i-1}$. Take a cop at $x$ co-ordinate $r_x+k^i$ and currently below the robber. If we ensure that by the time the cop's $x$ co-ordinate reaches $r_x+3.5k^{i-1}$ - say it took $t$ time - we have taken $3t/4+3k^{i-1}$ steps up(*), then we have lost that cop. 
\\
\\
So what we do is that if there are more bad cops beneath the robber, then for the next $3.1k^i$ time steps we move up unless we get a request from $j < i$. Similarly, if there are more bad cops above the robber we move down if possible. If we have no request currently then we remain in the same $y$ co-ordinate.
\\
\\
We say the system breaks if there are more than $\frac{1}{2}t^{i+1}$ cops in the interval $I_i$, enlarged to $[r_x + 0.5k^i,r_x+3.5sk^i]$. If the system never breaks then the robber wins, as the robber could never have been caught by a cop as no cops are ever in enlarged $I_1$. So we look at the first time the system breaks; say at $I_i$. What we'll show is that the system would have broken at $I_{i+1}$, so long as $i \neq m$. For $i = m$, we would have to have $\frac{1}{2}t^{m+1}$ cops. So proving this shows that the robber can escape less than $\frac{1}{2}t^{m+1}$ cops.
\\
\\
\begin{figure}[H]
\centering
\begin{tikzpicture}
\draw (0,-4) rectangle (8,4);
\fill[red] (0,-0.02) rectangle (0.04,0.02);
\draw (0.1,-0.05) -- (0.1,0.05) -- (0.12,0.06) -- (0.12,-0.06) -- cycle;
\draw (0.1*4,-0.05*4) -- (0.1*4,0.05*4) -- (0.12*4,0.06*4) -- (0.12*4,-0.06*4) -- cycle;
\draw (0.1*16,-0.05*16) -- (0.1*16,0.05*16) -- (0.12*16,0.06*16) -- (0.12*16,-0.06*16) -- cycle;
\draw (0.1*64,-0.05*64) -- (0.1*64,0.05*64) -- (0.12*64,0.06*64) -- (0.12*64,-0.06*64) -- cycle;
\end{tikzpicture}
\caption{Snapshot of four consecutive intervals, not to scale}
\end{figure}
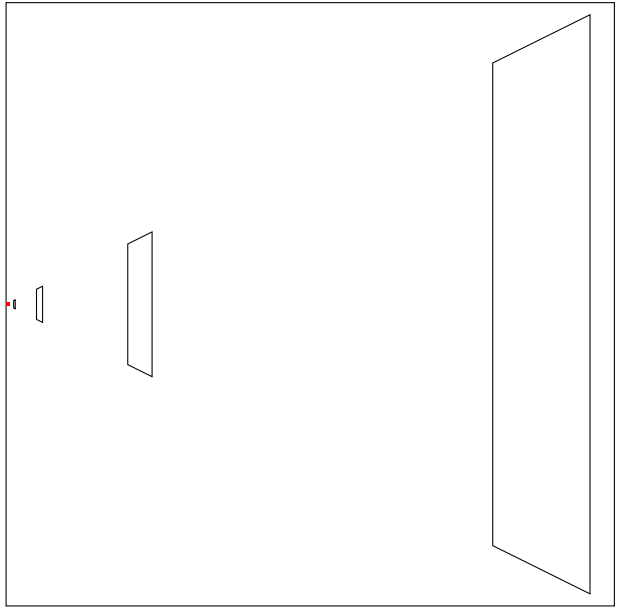
We first show that prior to this we never had two requests from a $j$ overlapping. Suppose that we did, then we would have had $\frac{1}{2}t^j$ cops in $[r_x+0.9k^j,r_x+3.1k^j]$, and we would have received another request in the next $3.1k^j$ time steps, and therefore there were $t^j$ cops in $[r_x+0.9k^j,r_x+12.4k^j]$. We look back time $0.5k^{j+1}-0.9k^{j}$. These cops were all in the interval $[r_x+0.5k^{j+1},r_x+1.5k^{j+1}+9.7k^{j}]$. Hence for $k$ sufficiently large, we have that they were all in the enlarged $I_{j+1}$ together, and therefore the system would have broken earlier. This gives a contradiction.
\\
\\
Next we show that before this all requests were completed. Take a request from $j$; we know that the smallest $t$ can be in (*) is $\frac{1}{3}(0.9k^j-3.5k^{j-1})$ and the largest is $3.1k^{j}$, so if we ensure that we move in the wrong direction at most $\frac{9}{120}k^j - 3k^{j-1}$ times due to requests from $h<j$ during the request from $j$, we have that we completed the request from $j$. If we have more than $13t^{j-h}$ requests from $h$ during a period of time $3.1k^j$, then we have that $2t^{j-h}$ happened in a period of time $0.5k^{j}$. Hence if we look back a time of $0.5k^{j}-0.5k^{h}$ from the first occurrence, we get that $2t^{j-h}\frac{1}{2}t^h$ cops were in the enlarged $I_j$; this gives a contradiction. So we have that the number of requests from $h$ is bounded by $13t^{j-h}$. Since this causes at most $3.1k^h$ moves in the wrong direction we need:
\begin{align*}
\sum_{h=1}^{j-1}40.3t^{j-h}k^h &< \frac{9}{120}k^j - 3k^{j-1}\\
40.3tk^{j-1} \sum_{n=0}^{\infty} (t/k)^{n}&< \frac{9}{120}k^j - 3k^{j-1}\\
40.3t &< (\frac{9k}{120}-3)(1-\frac{t}{k})
\end{align*}
\\
\\
So now we can show that the system had to break at $I_{i+1}$. We have $x$ cops in the enlarged $I_i$, $x>\frac{1}{2}t^{i+1}$. Looking $k^{i+1}-k^{i}$ earlier, they were all in $I_{i+1}$ earlier, so long as $k > 50$. We know that none of them were lost, but since $x>\frac{1}{2}t^{i+1}$, they would have triggered a request and been halved, so they must have had some that were bad already and had triggered a request with say $y$ other cops. We know $x+y < \frac{1}{2}t^{i+3}$ or else as they are in the enlarged $I_{i+2}$ together they would have broken the system; but as we halve the bad cops and we lost none of the $x$, all the lost ones must have been from the $y$, so $y>\frac{1}{4}t^{i+1}$. This gives a contradiction due to our choice of $t$.
\\
\\
We now choose $k$ appropriately. We choose $k = 700$, which gives us a lower bound of order $n^{\frac{\ln(3/2)}{2\ln(700)}}$. Note that $0.03$ is less than this exponent. To finish off the proof we need to show that we never moved up or down too much. We count how many requests we could have got from $i$. There were at most $n^{0.03}$ cops, and each time we get a request from $I_i$ it uses $\frac{1}{2}t^i$ cops, so we could have had at most $2n^{0.03}/t^i$ requests from $I_i$. Each request moves us $6k^i$ at most, therefore we moved at most $n^{0.03}\sum_{i=0}^{i=m} 12(k/t)^i$ steps up or down, which is $o(n)$. Hence for sufficiently large $n$, we need at least $n^{0.03}$ cops.
\end{proof}

As the current method stands, there is only small improvement to be had at higher speeds by modifying the constants used in the proof. In particular this does not give, as the speed approaches $\infty$, an exponent that approaches 1. Note that a speed 2 robber promising to sprint directly to the right can always be caught by $\sim n^{0.5}$ cops using the fixed time strategy. 
\\
\\
In the same vein, there is little to be gained by increasing the dimension. We can now cut down the bad cops by a factor of $2^d$, but we still have to bound the number of requests from earlier intervals and we have the issue that the bad cops we got rid off were all part of the $y$.

\section{Catching a Fast Robber}

In this short section, we show how these previous two theorems can be adapted to Catching a Fast Robber studied in \cite{BALISTER2017341} and \cite{ogfastrobber}. In \cite{BALISTER2017341}, the authors showed that for a robber with speed faster than $10^{25}$ the robber can win against $e^{c_s\log(n)/\log(\log(n)}$ for some constant $c_s$ depending on the speed s. They also showed that $\frac{n(2s-2)}{2s-1}+ O(1)$ cops can win.
\begin{corollary}
$O(n^{0.999})$ cops can catch a robber with speed 2 on the grid $[n]^2$, for $n$ sufficiently large.
\end{corollary}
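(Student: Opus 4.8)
The plan is to reduce Catching a Fast Robber on $[n]^2$ directly to the $1$-dimensional game of Rugby, for which Theorem 3.2 already gives an $O(n^{0.999})$ upper bound. The key observation is that in Catching a Fast Robber the robber can never leave the grid, whereas in Rugby the robber is trying to cross the plane $\{k\}\times[n]$; this is precisely a boundary constraint we can exploit.

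First I would set up the correspondence. Take the grid $[n]^2$ and think of it as a union of, say, two overlapping $1$-dimensional Rugby tunnels: one `horizontal' tunnel $[n]\times[n]$ in which the first coordinate plays the role of the unbounded $\mathbb{Z}$-direction (truncated to $[n]$) and the second coordinate plays the role of the bounded $[n]$-direction, and one `vertical' tunnel with the roles reversed. A robber of speed $2$ on the grid, projected onto either tunnel, moves at speed at most $2$, so a winning cop strategy for Rugby on each tunnel transfers. The catch is that a robber on the grid need not ever `cross $x=0$', so naively we only get to use the Rugby strategy to stop him crossing a fixed line. The fix is to run the Rugby strategy on both halves of the grid in both directions: place a team implementing the Theorem 3.2 strategy facing right from the left edge, one facing left from the right edge, one facing up from the bottom, one facing down from the top. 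Since the robber is confined to $[n]^2$, he must lie within distance $n/2$ of one of the four edges in a coordinate that is being guarded, and — as in Corollary 2.1 — whenever his relevant coordinate is within the zone controlled by a Rugby net, that team catches him.

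The main technical point to check is that Theorem 3.2's strategy really does catch the robber (not merely prevent him crossing a line) once he is inside the relevant strip: the theorem's conclusion is that the robber is either caught or trapped in a net that he can never pass, and on the grid `can never pass $x=0$' combined with the robber being boxed on the other side by the grid boundary forces an actual capture, because the net, once the robber is inside its convex hull, can be contracted onto him using the hole-shifting argument exactly as in the final stage of Theorem 1.2. I would spell out that the $O((10000^{\log_{10000} n})^{\,\log_{10000} 9900})$ cop count of Theorem 3.2 is $O(n^{\log_{10000} 9900}) = O(n^{0.999})$, and that using a constant number (four) of such teams, plus the constant-factor losses from Corollary 2.1's `move the cop to the nearest grid point' device, keeps us at $O(n^{0.999})$.

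The hardest part is making rigorous the claim that the robber, confined to the grid, is genuinely captured rather than merely contained: one must argue that the Rugby net cannot be escaped `through the grid boundary', i.e. that the boundary of $[n]^2$ behaves at least as favourably for the cops as the (absent) hard wall in Rugby would. I expect this to be routine but fiddly — it amounts to observing that restricting the robber's move set can only help the cops — and once it is in place the corollary follows by bookkeeping. I would also remark, as the paper does elsewhere, that the same argument with the speed-$s$ version of Theorem 3.2 yields the corresponding $s$-dependent bound.
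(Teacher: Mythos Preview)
Your reduction has a genuine gap. Placing four static Rugby teams at (or near) the four edges does not catch a robber who simply stays in the interior of the grid. The Rugby strategy of Theorem 3.2 only engages when the robber's $x$-coordinate is within roughly $30\cdot 10000^{n-1}$ of the net; a robber sitting at, say, the centre of $[n]^2$ is outside every net and is never touched. Your observation that ``he must lie within distance $n/2$ of one of the four edges'' is true but does nothing, because the Rugby net is far narrower than $n/2$. In Rugby this is not an issue --- a robber who never approaches $x=0$ has already lost --- but in Catching a Fast Robber the cops must actually land on him, so containment alone is not a win.

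The paper closes this gap not by using four teams but by modifying Stage~1 so that a \emph{single} net starts at the centre of the grid and, in addition to tracking $\lfloor r_y/2\rfloor$ vertically, advances one step horizontally toward the robber every turn. Since the robber is confined to $[n]$ in the $x$-direction, he is eventually forced inside the net, triggering Stage~2. Your Stage~2 idea (contract the net once the robber is trapped) is correct and matches the paper, but you should be explicit about the mechanism: the paper shrinks the net by moving the top or bottom edge inward whenever that edge is not currently participating in a hole-shift, and once the net is small enough the empty sub-squares can be expelled entirely; continuing to shrink then forces the robber into a smaller net and the induction goes through. In short, add an advancing net (one team, not four) and spell out the shrinking step, and your argument becomes the paper's.
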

\begin{proof}
We use the same number of cops, i.e $2\times 9100^n$ for a $10000^n$ size grid. We modify stages 1 and 2. In stage 1, the net starts at the centre of the grid with longer side along the $y$ axis. We then walk up/down so that the $y$ co-ordinate is on $\lfloor r_y/2\rfloor$. We then see if the robber is in the right or left hand side of the grid - without loss of generality on the left. The strategy is now identical to before but the net also moves one to the left each time step, forcing the robber to eventually start stage 2.
\\
\\
In stage 2 we now move the bottom or top of the net up or down to shrink the net when that edge is not involved in a move. When we have shrunk the net enough we can get rid of all the empty squares to the outside. So now we just continually shrink the net, uniformly along each side, forcing the robber to eventually get caught in one of the smaller nets and we can induct.
\end{proof}
Now we move on to the lower bound. We show that a robber of speed 2 can continually escape $n^{0.03}$ cops. Our strategy is to run around the grid playing as if it were the tunnel. We have to be careful in checking that we can choose a good place to start and that we can change direction.
\begin{corollary}
A robber with speed $2$ can evade capture from $\Omega(n^{0.03})$ cops on the grid $[n]^2$, for $n$ sufficiently large.
\end{corollary}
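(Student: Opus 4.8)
The plan is to run the proof of Theorem~3.3 on the grid, the way Corollary~4.1 ran the proof of Theorem~3.2: the robber treats $[n]^2$ as a single long, closed $1$-dimensional Rugby tunnel. Fix a small $\delta>0$ (the few explicit inequalities it must satisfy are collected below; $\delta=1/50$ works) and let $\Gamma\subseteq[n]^2$ be a meander --- a comb-shaped closed curve, thought of as a thin rectangle of width $w:=n^{1-\delta}$ folded back and forth so that it fills a constant fraction of the grid, with perimeter $L\asymp n^{2}/w=n^{1+\delta}$. The robber runs around $\Gamma$ at speed $2$ forever; ``arc length along $\Gamma$'' plays the role of the robber's $x$-coordinate and ``signed normal displacement from $\Gamma$'', which ranges over an interval of length $\asymp w$, plays the role of his $y$-coordinate. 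Since the robber moves along $\Gamma$ at full speed and a cop at speed $1$, a cop lying ahead of the robber along $\Gamma$ at arc-distance of order $k^i$ and at a normal position from which it could still intercept is exactly a cop of the interval $I_i$ of Theorem~3.3 (with $k=700$, $t=\sqrt{3/2}$, $m=\lfloor\log_k w\rfloor$), and the robber answers ``requests'' by stepping normal to $\Gamma$. Because $k^i\le k^m\le w\ll L$ the closedness of $\Gamma$ causes no overlap between the intervals, and a cop far round $\Gamma$ is simply one the robber has yet to reach, to be caught by $I_m$ later. With this dictionary the whole apparatus of Theorem~3.3 --- requests, bad cops, the bounds on overlapping requests and on small-$j$ requests during a large one, the cascade ``a break at $I_i$ forces an earlier break at $I_{i+1}$'', and the count showing the total normal drift stays small --- carries over, and shows that fewer than $\tfrac12t^{m+1}\asymp w^{\log_k t}$ cops can never catch the robber.

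Three bookkeeping checks then make the choice of $\delta$ legitimate, and the first of them is the point ``we have to be careful in checking that we can choose a good place to start''. First, Theorem~3.3's clean-start hypothesis --- here, that every cop begins at relative arc-distance more than $Ck^m$ ahead of the robber (for a fixed constant $C$), which keeps $I_1$ empty at the start and forces the first break, if any, to occur late enough that its cascade step can look back --- forbids an arc of length $O(k^m)=O(w)$ of $\Gamma$ per cop and per direction of travel, so with at most $n^{0.03}$ cops the bad (starting vertex, direction) pairs cover only $O(n^{0.03}w)=O(n^{1.03-\delta})$ of the $\asymp L$ available, which is $o(L)$ once $\delta>0.015$; the robber, free to choose its initial vertex after seeing the cops, therefore has a legitimate start. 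Second, the total normal drift is, by the same estimate as at the end of Theorem~3.3's proof, $O(n^{0.03}(k/t)^m)=O(n^{0.03}w^{1-\log_k t})$, which is $o(w)$ once $\delta$ is small. Third, the bound $\Omega(w^{\log_k t})=\Omega(n^{(1-\delta)\log_k t})$ must beat $n^{0.03}$, and since $\log_k t=\tfrac{\ln\sqrt{3/2}}{\ln700}>0.0309$ this holds once $\delta<0.029$. All three are met by $\delta=1/50$.

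The step I expect to be the real obstacle is the one the hint calls ``change direction''. Running round $\Gamma$ continually turns the robber's heading, which swaps ``ahead'' and ``behind'' along $\Gamma$ and, near a turn, distorts the normal coordinate; and the $\ell_\infty$ metric makes a sharp corner an awkward place to erect such coordinates. I would cope by building $\Gamma$ with every turn rounded at scale $\asymp w$, so the heading changes gradually and the normal coordinate remains well defined throughout the width-$\asymp w$ tube around $\Gamma$ that the argument actually uses, and then check that passing a bend perturbs the relevant arc-distances and normal positions by only $O(1)$ relative to a straight tunnel --- an error comfortably absorbed by the large multiplicative gaps in Theorem~3.3 (such as that between $7k^i$ and $0.5k^{i+1}$, and the factor $\tfrac12$ in the request threshold). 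The crucial point is locality: within the time window governing any one step of the cascade, about $k^{m+1}\asymp kw$ moves, the robber travels only $\asymp w\ll n$, while the meander's straight runs have length $\asymp n$, so the robber meets just $O(1)$ bends per step and the perturbations never compound. Once the turns are handled, the cascade gives that $\Omega(w^{\log_k t})=\Omega(n^{0.03})$ cops are needed, and --- since nothing in the argument uses that the robber is caught only at a final time --- the robber in fact evades capture for all time, as required.
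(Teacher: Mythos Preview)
Your meander idea is genuinely different from what the paper does. The paper sends the robber around a single coarse rectangular loop near the boundary of $[n]^2$: four straight runs of length $\asymp n$, with the full grid width $\asymp n$ available as the ``tunnel width'' (so $m=\lfloor\log_k(n/2)\rfloor$ and the bound is $\Omega(n^{\log_k t})$ directly). At each of the four corners the robber \emph{restarts} Theorem~3.3 from scratch, and the two pigeonhole counts in the paper's proof are there precisely to guarantee that a legal restart point exists in each stretch of length $n/6$.

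Your approach has a real gap, and it is not the bends issue you flagged. The dictionary ``arc length along $\Gamma$ plays the role of the $x$-coordinate'' fails because a meander is not locally a tunnel: two points of the grid can be at small $l_\infty$ distance yet enormous arc distance. Concretely, take the robber at $(jw,y)$ running up tooth $j$, with $y<n/2$, and place a single cop on the adjacent tooth at $((j{+}1)w,\,y+w)$. In $w$ steps the cop can move $(-1,+1)$ each step and reach $(jw,y+2w)$; the robber, advancing $+2$ in $y$ per step, is at exactly that vertex. So this cop intercepts. But its arc distance from the robber is about $2(n-y)+w\gg k^m\asymp w$, so it lies outside every interval $I_1,\dots,I_m$, your strategy issues no request for it, and the robber never drifts. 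Your sentence ``a cop far round $\Gamma$ is simply one the robber has yet to reach, to be caught by $I_m$ later'' is therefore false: a cop one tooth over is far in arc length but can strike now. The same mechanism lets a cop the robber has already passed slide across a couple of tooth-gaps (grid distance $O(w)$) while the robber spends time $\asymp n$ on a single tooth, and so recycle indefinitely; this also invalidates the ``each cop is used at most once per interval'' count underlying your total-drift estimate.

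None of this is absorbed by your $O(1)$-bends bookkeeping, which concerns distortion of coordinates when the \emph{robber} rounds a corner; the failure here is that the arc-length coordinate does not control interception at all once the curve folds back within distance $w$ of itself. The paper's coarse loop avoids the problem because on it arc distance and grid distance are always comparable, so the Theorem~3.3 interval picture is literally correct on each straight run; the cost is having to re-initialise at each of the four turns, which is exactly what the paper's extra pigeonhole arguments handle.
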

\begin{proof}
$k$, $t$ and $m$ are as before. We've already shown that we move up or down $o(n)$, therefore we can assume that we do not move up or down more than $n/6$ when moving a distance of $n$ through the tunnel. Say the bottom left corner has co-ordinate $(0,0)$, then we aim to start somewhere in the $n/6$ by $n/6$ square that has bottom left co-ordinate $(0,2n/3)$. We then sprint through the tunnel to the right, until we get to $x$ co-ordinate $2n/3$ and then we aim to turn down and repeat.
\\
\\
What suffices to initialise our strategy is that at most $\frac{1}{4}t^i$ cops are in the $2k^i$ by $2k^i$ square centred on the robber. This is because, if this does hold, imagine only the cops that can possibly catch the robber when he starts sprinting to the right. These cops could have stayed in their positions whilst the robber sprinted from a distance $n$ away towards them, playing the strategy described in Theorem 3.3. As at most $\frac{1}{4}t^i$ end up being a distance $2k^i$ away from the robber for all $i$, no request was given.
\\
\\
Let $C_i$ denote the number of squares that are banned due to there being more than $\frac{1}{4}t^i$ cops on the $2k^i$ by $2k^i$ square around it. We consider the $16k^{2i}$ sub-lattices with common difference $4k^i$. By the pigeonhole principle one of these has more than $C_i/16k^{2i}$ banned squares in it. Then we must have at least $C_it^{i}/64k^{2i}$ cops, therefore $C_i < 64n^{0.03}k^{2i}/t^{i}$. Therefore, we have that $\sum_{i=1}^{i=m}C_i < 100n^{0.03}k^{2m}/t^{m} < n^2/36$ for sufficiently large $n$. 
\\
\\
We can then sprint to the right using the strategy given in Theorem 2.3. When we get to $x$ co-ordinate $2n/3$ we aim to turn down. At each step, if we can we start sprinting down, we do. We consider $D_i$, now the number of $x$ co-ordinates banned due to there being more than $\frac{1}{4}t^i$ cops in the $2k^i$ by $2k^i$ square centred on it at the time the robber was there. We consider the $8k^i$ sub-lattices, now 1 dimensional, and we have that $D_it^i/8k^i < n^{0.03}$. So summing we get that $\sum_{i=1}^{i=m}D_i < 10n^{0.03}k^{m}/t^{m}$. Hence, for sufficiently large $n$, less than $n/6$ $x$ co-ordinates are banned. So we can start sprinting down the right hand side of the grid.
\\
\\
We repeat this strategy forever, turning clockwise around the square. Note that we never leave the square as we turn between $2n/3$ and $5n/6$ of the way to the other side and we do not move more than $n/6$ up or down as we go through the tunnel. Hence, for sufficiently large $n$, the robber with speed $2$ can evade capture on the $n$ by $n$ grid.
\end{proof}

\section{Open Questions}
We do not believe that either of our bounds are optimal for Covering on $[n]^2$, we also do not have an exact guess where the truth lies. We do think that it is likely closer to $n^2$ than $n$. As said before, we can certainly improve Theorem 1.2 with an adaptation of the ideas. Potentially, getting into the weeds of the cops moving to their new shadow cops' position could be fruitful. But, being very optimistic, even if it were possible to do the argument in a 3 by 3, this would only improve the upper bound to  $n^{1.89}$. So we do not see getting the bounds much closer by adapting Theorem 1.2.
\\
\\
We think to drastically improve the lower bound we would need a new idea, but this we are less confident of. In fact our original proof gave us $n^{1.05}$, so we have been able to nudge it up quite far. As commented earlier, we would need to modify the path or the sub-shapes to improve it further.
\\
\\
Perhaps the most natural question about Covering in the $d$ dimensional grid is whether the behaviour as $d$ goes to $\infty$ is like the lower or upper bound, or neither.

\begin{question}
Is there a constant $c_1 > 0$ such that for all dimensions, for sufficiently large $n$, we need more than $n^{\frac{d}{2}+c_1}$ cops to win Covering on $[n]^d$? Is there a constant $c_2 > 0$ such that for all dimensions, for sufficiently large $n$, we need at most  $n^{\frac{d}{2}+1-c_2}$ cops to win?
\end{question}

For Rugby, an `effective' lower bound really should have the property that for fixed $d$ and speed tending to infinity, the exponent tends to $d$. This seems quite challenging with the freedom the cops have. We also think it would be very interesting to find an upper bound that did not have an argument dependent on speed, especially if it used a different idea to Theorem 1.2 and Theorem 3.2. We strongly suspect the two conjectures below to be true. If the first were false, that would imply a `critical' speed where the structure of the game changes completely, which seems unlikely.

\begin{conjecture}
In the 1 dimensional game of Rugby, for all speeds of the robber, $o(n)$ cops suffices to win.
\end{conjecture}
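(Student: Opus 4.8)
The plan is to carry the recursive ``net'' argument of Theorem 3.2 as far as it will go for a speed-$s$ robber, and then to isolate the single place where $s=2$ is genuinely used. As there, the argument would be a recursion on a scale parameter: assuming that in a tunnel $\mathbb{Z}\times[n_0]$ the robber can be trapped in a \emph{net} --- a rectangular loop of sub-squares, each carrying a team of cops that runs the strategy one level down, with periodic gaps held by shadow cops --- one would build the same statement for a tunnel of height $Ln_0$ by tiling a large net with blocks of copies of the previous strategy. Stage~1 keeps the net centred on a track of the robber's vertical coordinate until the robber is forced to cross the net's right-hand edge, at which moment it lies inside the loop; stage~2 then cycles the empty sub-squares around the loop, always on the side far from the robber, so that the robber stays enclosed and never reaches an empty sub-square or a team in transit. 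The base case, the line $\mathbb{Z}$, is handled by $O(s)$ cops.

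The modifications I would make for speed $s$ are: (i) in stage~1 the net must be taller, of vertical size $\theta_s n$ for a constant $\theta_s<1$ with $\theta_s\to1$ as $s\to\infty$, because a cop can only maintain a track $\varphi(r)$ that is $\tfrac1s$-Lipschitz in the robber's height $r$, so the unavoidable lag grows with $s$; (ii) the net's width and the stage-2 thresholds must be $\Theta(s)$, so that during the $\Theta(\text{sub-square side})$ time it takes a team to relocate across an empty sub-square a speed-$s$ robber cannot reach a team in transit; (iii) consequently the gaps on the vertical edges of the net must be spaced $\Omega(s)$ sub-squares apart. With these changes stages~1 and~2 should go through essentially as before, and the induction closes \emph{provided the resulting recursion for the cop count is sub-linear in the tunnel height}.

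That proviso is the obstacle, and it is exactly where $s=2$ is being used. The net's perimeter, in sub-square units, is at least $2\theta_s$ times the number of sub-squares spanning the tunnel's height, so the cop count is multiplied by at least $2\theta_s(1-g)$ each time the tunnel height is multiplied by $L$, where $g$ is the gap fraction of the net. Sub-linearity needs $2\theta_s(1-g)<1$, i.e.\ $g>1-\tfrac{1}{2\theta_s}$; but by (iii) $g=O(1/s)$, which is far too small once $s$ is at all large (whereas $\theta_2=\tfrac12$ makes the requirement vacuous at $s=2$, which is why Theorem 3.2 squeezes through, getting $9900^n$ against $10000^n$). Relocating a whole team is too slow relative to a fast robber to allow enough gaps, and a rectangular loop that spans the tunnel carries too much perimeter; crucially no tuning of the constants in the Theorem 3.2 strategy removes this, so a genuinely new ingredient is needed.

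The ingredient I would hunt for is a way to maintain the blocking ``cut'' without paying for a full-height loop on every scale. Two lines of attack: first, split regimes --- use the cheap fixed-time strategy of Theorem 2.1 ($O(T^{1/2})$ cops in the free coordinate) while the robber is sprinting roughly to the right, and fall back on a net only during short ``manoeuvring'' windows, whose total length one would try to bound; second, replace the single tall net by a staircase of short strip-confined nets that hand the robber from one to the next, so that neither any net nor any recursion step spans more than a $1/k$ fraction of the tunnel --- the snag being that preventing escape along the top or bottom of the staircase seems to force each strip-confined net to track the robber in $x$ as well, reintroducing a thinner copy of the same recursion. I expect a successful proof to combine these, and I expect the real difficulty to be precisely the quantitative one above: finding a structure whose cost per scale stays strictly below the geometric growth of the tunnel for every speed $s$.
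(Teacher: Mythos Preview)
The statement you are addressing is a \emph{conjecture} in the paper, not a theorem: the paper offers no proof, and explicitly flags it as open in Section~5. So there is no ``paper's own proof'' to compare your proposal against.

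Your write-up is likewise not a proof but a diagnosis, and on that score it is accurate and aligns with the paper's own commentary after Theorem~3.2. The paper says that the net argument is ``dependent on speed~2'' because stage~1 uses only $\sim n$ cops at speed~2 but $\sim 2n(s-1)/s$ cops at speed~$s$, so the loop carries too much perimeter for the recursion to be sub-linear; your computation that sub-linearity requires a gap fraction $g>1-\tfrac{1}{2\theta_s}$ while the stage-2 constraints force $g=O(1/s)$ is a sharper version of exactly this remark. The paper also says it ``will be hard'' to get $o(n)$ for all speeds via a Theorem~1.2 type argument because looping the cops around ``introduces redundancy'', which is your point about the full-height loop's perimeter cost.

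Where your proposal goes beyond the paper --- the split-regime idea using the fixed-time $O(T^{1/2})$ strategy during sprints, and the staircase of short nets --- these are genuinely new suggestions, but as you yourself note they are speculative and each has an unresolved snag (bounding the total manoeuvring time; preventing escape at the staircase seams without reintroducing the recursion). So the honest summary is: you have correctly identified the obstruction the paper identifies, you have articulated it more quantitatively, and you have sketched two plausible directions, but neither you nor the paper has a proof of this conjecture.
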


\begin{conjecture}
In the $d$ dimensional game of Rugby, for all $\epsilon > 0$, we have that there exists a speed $s$ such that a robber of speed $s$ can beat $n^{d -\epsilon}$ cops for sufficiently large $n$
\end{conjecture}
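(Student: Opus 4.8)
The natural plan is to adapt the ``sprint-and-dodge'' argument of Theorem 3.3 to $d$ dimensions and to a robber of large speed $s$. Two features make this promising. First, every place where Theorem 3.3 needs $k$ to be a large constant is there only to guarantee that a clump of cops occupying a short relative-$x$ interval is still a clump when one looks back in time; but the relative $x$-velocity of every cop lies in $[-(s+1),-(s-1)]$, so over the $\Theta(k^i/s)$ turns it takes a cop to cross from $I_i$ to $I_{i-1}$ such a clump spreads by only $O(k^i/s)$, which is negligible against the interval width $\Theta(k^i)$ once $s$ is large. Thus for \emph{any} fixed $k>1$ there is a speed $s_0(k)$ above which the whole bookkeeping of Theorem 3.3 goes through. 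Second, the $d$ perpendicular coordinates give the robber more room to shed cops: when servicing a request at level $i$ the robber moves at speed $\sim s$ along a sign vector $v\in\{-1,+1\}^d$, and a pigeonhole/averaging argument over the $2^d$ choices of $v$ shows that some $v$ leaves all but a $2^{-d}$-fraction of the currently ``bad'' cops in $I_i$ unable ever to reach the robber (a cop is dangerous for at most one $v$, namely $\mathrm{sign}(u)$ for its perpendicular offset $u$, and then only if $u$ has no zero coordinate). This lets the threshold ratio $t$ in the recursion be taken arbitrarily close to $2$.

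Feeding these two gains into the recursion of Theorem 3.3, the robber escapes $\sim t^{m}=n^{\log_k t}$ cops, and, exactly as in Theorem 3.3, the total perpendicular excursion is still $\sim n^{\log_k t}\sum_i(k/t)^i=o(n)$ provided one claims slightly fewer than $t^m$ cops, so the robber stays inside the tunnel. But since $t<2$, the exponent $\log_k t$ is at most $\log_k 2<1$ for every fixed $k\ge 2$; and one cannot force $\log_k t\ge 1$ by pushing $k$ down to (or below) $t$, because then the total perpendicular displacement, which is $\Theta(n^{\log_k t})$ (summing $\le 2N/t^i$ requests of displacement $\Theta(k^i)$ over the levels), would already reach or exceed $n$ and the robber would leave the tunnel. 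So this circle of ideas by itself cannot reach exponent $d-\epsilon$ for $d\ge 2$.

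To break past exponent $1$ I would nest the construction over the coordinates. Write $[n]^d=[n]^{d-1}\times[n]$ and suppose inductively that for some speed $s'$ a speed-$s'$ robber can beat $N_{d-1}=n^{\,d-1-\epsilon/2}$ cops in $(d-1)$-dimensional Rugby. In $d$ dimensions the robber treats the last coordinate as a supply of $\sim n/\ell$ disjoint ``arenas'' of width $\ell$. Because it has speed $s$ the robber can hop from one arena to another in a single turn, carrying its $x$-progress and the state of its $(d-1)$-dimensional sub-game with it, whereas a cop needs $\gtrsim\ell$ turns merely to follow; hence by pigeonhole there is always an arena containing only an $O(\ell/n)$-fraction of the cops that could ever be relevant, and the robber plays the $(d-1)$-dimensional strategy there against that thinned, ``fresh'' population. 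Cycling through the arenas faster than the cops can re-concentrate should force the cop count up by a factor $\sim n/\ell$, so the exponent gains almost $1$ per coordinate and reaches $d-\epsilon$ for $\ell$ a small constant and $s=s(d,\epsilon)$ large.

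The main obstacle is precisely this nesting against an adaptive opponent: a cop that has fallen behind in $x$ inside one arena is not spent, so the ``fresh arena'' advantage is only genuine if one can show the robber outruns every cop that tries to chase it from arena to arena and that the thinning compounds cleanly across all $\sim n/\ell$ arenas instead of being undone. Keeping the required speed $s(d,\epsilon)$ finite, preventing $\epsilon$ from degrading through the induction, and reconciling the arena-hopping schedule with the $x$-monotone sprint of the inner game are where the real work lies; it is quite possible that the right bookkeeping is not a cop count at all but a potential recording, per arena, the $x$-``debt'' owed by each relevant cop. Even the base case $d=1$ with exponent tending to $1$ — i.e. driving the constants in Theorem 3.3 all the way, which as noted the present argument does not do — seems a nontrivial warm-up.
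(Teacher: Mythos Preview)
The statement you are attempting to prove is a \emph{conjecture} in the paper, not a theorem: the paper offers no proof, and in the surrounding discussion the author explicitly calls an effective lower bound with exponent tending to $d$ ``quite challenging'' and leaves it open. So there is no paper proof to compare your proposal against.

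Your proposal is also not a proof, and to your credit you say so yourself. Two remarks on where it stands relative to the paper's own commentary. First, your observation that the Theorem~3.3 machinery, even with large $s$ and with the $2^d$-way dodge in $d$ perpendicular directions, cannot push the exponent past $1$ is exactly what the paper already notes after Theorem~3.3: the author points out that higher speed gives ``only small improvement'' and that in higher dimension ``we can now cut down the bad cops by a factor of $2^d$, but we still have to bound the number of requests from earlier intervals and we have the issue that the bad cops we got rid of were all part of the $y$.'' That last clause is precisely the bottleneck forcing $t<2$ in your analysis, so your diagnosis matches the paper's. Second, your nesting-over-coordinates idea is new relative to the paper, but as you identify, the real obstruction is that cops are not consumed when the robber hops arenas: a cop that lost ground in one arena can re-enter the picture later, so the ``thinning by $\ell/n$'' does not obviously compound across arenas against an adaptive adversary. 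Without a mechanism (your suggested per-arena $x$-debt potential, or something else) that provably prevents cops from being recycled, the induction step is a heuristic, not an argument. In short, the proposal is a reasonable research plan for an open problem, not a proof; the paper agrees it is open.
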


One last interesting question is whether Rugby is `equivalent' to catching a fast robber. It seems that any sensible strategy for one should definitely be adaptable to the other, but we cannot see any systematic way of achieving this. 
\\
\\
However, if one has a cop strategy for catching a fast robber that promises to have caught the robber in $O(n)$ time, then we can see how we could adapt it to a strategy for Rugby. Conversely, suppose that the cops can win Rugby with a strategy that promises to prevent the robber passing $x=0$, then with a bit more thought we could adapt this into a strategy that catches the robber in the $n$ by $n$ grid.
\begin{question}
Do there exists constants $c_1$ and $c_2$, such that $c_1f(n) \leq g(n) \leq c_2f(n)$; where $f(n)$ is the number of cops needed to win 1 dimensional Rugby and $g(n)$ is the number of cops to catch a fast robber in the grid (where both games are played with a speed 2 robber)?
\end{question}

\section*{Acknowledgements}
The author would like to thank Trinity College Cambridge for funding the research.
\bibliographystyle{amsplain}
\bibliography{ACoveringPursuitGame}
\end{document}